\documentclass[final,12pt]{elsarticle}
\usepackage{a4, amsmath, amssymb, graphics, subfigure, fullpage, color}
\usepackage{soul}
\usepackage[colorlinks]{hyperref}
\usepackage{ulem}
\hypersetup{colorlinks,citecolor=green,filecolor=black,
            linkcolor=blue,urlcolor=blue}
\usepackage{fancyhdr}
\usepackage{multirow}
\usepackage{amsthm}
\usepackage{booktabs}
\newtheorem{theorem}{Theorem}

\newtheorem{corollary}{Corollary}
\newtheorem{proposition}{Proposition}
\newtheorem{lemma}{Lemma}
\newtheorem{example}{Example}
\newtheorem{remark}{Remark}
\newtheorem{assumption}{Assumption}

\newcommand{\cN}{\mathcal{N}}
\newcommand{\cF}{\mathcal{F}}
\newcommand{\FF}{\mathbb{F}}

\DeclareMathOperator{\argmax}{\textrm{argmax}}
\DeclareMathOperator{\E}{\mathbb{E}}

\renewcommand{\Pr}{\mathbb{P}}
\DeclareMathOperator{\var}{\mathbb{V}}

\usepackage{dsfont}
\DeclareMathOperator{\ind}{\mathds{1}}
\DeclareMathOperator{\sign}{\textrm{sign}}
\newcommand{\beq}{\begin{equation}}
\newcommand{\eeq}{\end{equation}}
\newcommand{\MyBib}{MyBib}

\usepackage{lineno}
\begin{document}

\begin{frontmatter}

\title{Bernoulli--Doob representation, asymptotic limits and boundary behavior of bounded homogeneous diffusion martingales}

\author[DB]{Damiano BRIGO}
\address[DB]{Imperial College London, Department of Mathematics,
United Kingdom\\
\href{mailto:damiano.brigo@imperial.ac.uk}
     {damiano.brigo@imperial.ac.uk}}
\author[FV]{Fr\'ed\'eric VRINS}
\address[FV]{UCLouvain, LIDAM/LFIN, Belgium\\
\href{mailto:frederic.vrins@uclouvain.be}
     {frederic.vrins@uclouvain.be}}

\begin{abstract}
We study one-dimensional time-homogeneous diffusion martingales
evolving in a bounded interval $D=[a,b]$, where the diffusion
coefficient vanishes at the endpoints. Under mild regularity
assumptions, we show that such processes admit unique absorbed
strong solutions and are bounded uniformly integrable martingales
(Theorem~\ref{thm:absorbed_existence_uniqueness}). We further prove
that they converge almost surely to a limit supported on $\{a,b\}$,
whose law is the generalized Bernoulli distribution on the two
endpoints with mean equal to the initial value
(Proposition~\ref{prop:variance} and Corollary~\ref{cor:Bernoulli}).

Our main results establish an equivalence, under Brownian-filtration
and Markov assumptions, between this class of bounded homogeneous
diffusion martingales and Bernoulli-Doob martingales
(Theorems~\ref{Th:SDEasCondExp} and~\ref{Th:CondExpasSDE}).
We also clarify the relation between Feller's boundary
classification and the pathwise SDE framework, showing that the
martingale constraint forces absorption at every attainable
boundary. Finally, we prove that the Bernoulli limit is genuinely
asymptotic: for every fixed finite horizon $T$, the probability of
not yet having hit the boundary is strictly positive
(Theorem~\ref{thm:positive_survival_hitting_time}), even when the
individual boundaries are accessible. The results are illustrated by
several explicit examples, including processes arising in credit-risk
modelling.
\end{abstract}

\begin{keyword}
Bounded stochastic processes \sep martingales \sep asymptotic law
\sep Bernoulli-Doob martingale \sep boundary classification \sep
Feller theory
\end{keyword}

\end{frontmatter}

% ===================================================================
\section{Introduction}
\label{sec:intro}
% ===================================================================

Bounded martingales lie at the intersection of martingale theory,
one-dimensional diffusions, and boundary classification. On the one
hand, boundedness implies uniform integrability and almost sure
convergence. On the other hand, when the martingale is generated by
a time-homogeneous diffusion evolving in a compact interval, the
vanishing of the diffusion coefficient at the endpoints creates a
non-trivial interplay between boundary attainability, pathwise
absorption, and the asymptotic law of the process.

In this paper we study driftless one-dimensional diffusions
\beq
dZ_t = \sigma(Z_t)\,dW_t\,,\qquad Z_0 = z \in [a,b]\,,
\label{eq:dZ}
\eeq
on a compact interval $[a,b]$, under the standing assumptions that
$\sigma(a)=\sigma(b)=0$, $\sigma(x)>0$ for $x\in(a,b)$, and
$\sigma$ is locally H\"older-$1/2$ on $(a,b)$. We ask three basic
questions: what is the limiting law of such bounded homogeneous
diffusion martingales, how is it related to their boundary
classification, and when can such processes be represented as
conditional expectations of endpoint-valued random variables?

Our main results show that the answer is remarkably rigid. Every
diffusion martingale in this class converges almost surely to a
$\{a,b\}$-valued limit whose law is the generalized Bernoulli
distribution on the two endpoints. Moreover, every such process
admits a Bernoulli-Doob representation, and a converse holds for
continuous time-homogeneous Markov martingales on Brownian
filtrations. We also show that the martingale constraint forces
absorption at any attainable boundary, and that the Bernoulli limit
is genuinely asymptotic: for every fixed finite horizon $T$, the
probability of remaining in the interior up to time $T$ is strictly
positive.

While bounded diffusions and bounded martingales arise in several
contexts, general probabilistic results on the asymptotic law,
boundary behavior, and representation of time-homogeneous bounded
diffusion martingales appear to be comparatively limited.  In particular, the Jacobi process is a bounded diffusion which has been used to model stochastic correlations. Its properties have been extensively studied, but are derived under the assumption of non-zero (usually, mean-reverting) drift, thereby ruling out the martingale case from the scope of the analysis; see e.g.~\cite{JenkinsSpano17} and \cite{Markus21} and references therein.\footnote{Some results for the martingale case can be obtained by taking the limit when the mean reversion speed $\kappa$ of the Jacobi process tends to 0. For example, the stationary law of the mean-reverting Jacobi process is a Beta, whose parameters are both proportional to $\kappa$. It is clear that the case $\kappa=0$ is ruled out, because Beta$(0,0)$ is an improper law. However, the family of Beta distributions converges weakly to the Bernoulli distribution as $\kappa\downarrow 0$, such that the latter is expected to be the stationary law of the Jacobi martingale. It is important to note the distinction between the Jacobi process considered in this paper with the \textit{multivariate Jacobi process} (MJP), which is a $K$-dimensional diffusion evolving on the unit simplex: the components of a $K$-dimensional MJP, $Z_{j,t}$, $j\in 1,2,\ldots,K$, are non negative and sum to one at all $t$; see, e.g., \cite{GOURIEROUX2006475}. We refer to~\cite{Lars24} for a detailed discussion of the boundary behavior of hybrid (generalized) MJPs.}%Still, other properties such as the behavior at the boundaries cannot be recovered by computing such a limit.  
Related bounded
martingale examples appear in the conic-martingale literature.
In particular, conic martingales with uniform marginals at all times
are studied in~\cite[Sections 2 \& 3]{BrigoJV20}, but those examples
evolve between time-dependent bounds of the form $[-b(t),b(t)]$ with
$b(0)=0$. Examples of diffusions evolving between constant
boundaries and displaying uniform marginals are given
in~\cite[Section 4]{BrigoJV20}, but the corresponding processes are
mean-reverting and hence are not martingales. %\textcolor{red}{Similarly, the Jacobi process is a widely studied bounded diffusion. However, the corresponding results are derived under the assumption of non-zero (usually, mean-reverting) drift, thereby ruling out the martingale case from the scope of the analysis; see e.g.~\cite{JenkinsSpano17} and \cite{Markus21} and references therein. The stationary law of the mean-reverting Jacobi process is a Beta, whose parameters are proportional to the mean reversion speed $\kappa$. Although Beta$(0,0)$ is not a valid law, the limit of the Beta family as $\kappa\downarrow 0$ is a Bernoulli.} 
Ingersoll~\cite{Ingersoll97} studied
option valuation under a bounded futures foreign-exchange process.\footnote{We are grateful to Bernard Gourion for bringing Ingersoll’s reference to our attention.} The latter can be seen as a ``bounded geometric Brownian motion'' and is a martingale under the appropriate measure. Ingersoll showed that it has a vanishing volatility at the boundaries and analysed the associated
boundary effects in a model-specific setting, see Section \ref{sec:Ingersoll}. 
The $\Phi$-martingale
of Jeanblanc and Vrins~\cite{Vrins16} is a bounded diffusion
martingale which can be thought of as a ``bounded Brownian motion''~\cite{Carr17}, and whose boundaries are proven to be inaccessible. % and whose limit law is a Bernoulli.
Interestingly, the limit laws of the Jacobi, the bounded geometric Brownian motion and the bounded Brownian motion martingales all display a Bernoulli-type behavior, although the formal proof was only given for the last case.

Our contribution is to provide a
general treatment of bounded homogeneous diffusion martingales
within this class. The absorbed diffusions considered here also fit
naturally into the Feller-process framework on compact state spaces;
see, for example, Criens~\cite{Criens23} for recent general results
on Feller--Dynkin processes and martingale problems. We refer to~\cite{Peskir15} for a review of Feller’s work on boundary classification of one-dimensional diffusion processes in the broad context of mathematics and physics.  

Processes of the form~\eqref{eq:dZ} also arise naturally in
mathematical finance, for example in bounded price models, survival
processes, and related conditional-probability martingales. We
return to these connections in Section~\ref{sec:applications}. For
orientation, three representative examples are:
\begin{itemize}
\item the discounted price of a default-free zero-coupon bond in a
      non-negative short-rate model, which is a bounded martingale
      that typically remains in the interior of $[0,1]$;
\item the discounted price of a defaultable zero-recovery bond,
      which may hit the lower boundary before maturity;
\item the survival probability process
      $S^\FF_t(T)=\Pr(\xi>T\mid\cF_t)$, which is a bounded martingale
      and, when $\xi$ is an $\FF$-stopping time, becomes
      $\{0,1\}$-valued after time $T$.
\end{itemize}
These examples illustrate that bounded martingales may behave very
differently near their boundaries, depending on the underlying
information structure.

In addition to the diffusion viewpoint, we establish a connection
with what we call \textit{Bernoulli-Doob martingales}, namely
processes $Z=(Z_t)_{t\ge0}$ of the form
$Z_t=\E[B\mid\cF_t]$, where $B$ is a Bernoulli random variable and
$\FF=(\cF_t)_{t\ge0}$ is a filtration satisfying the usual
conditions. Such processes arise naturally whenever a binary event
is progressively revealed through time.\footnote{This is exactly the
case of the survival probability example above, where
$S^\FF_t(T)=\E[B\mid\cF_t]$ with $B=\ind_{\{\xi>T\}}$.} By
construction, they are bounded in $[0,1]$, have constant
expectation equal to $\Pr(B=1)$, and converge almost surely to a
$\{0,1\}$-valued limit.

We make the following contributions to the literature. 
\begin{enumerate}
\item \textbf{Existence and uniqueness}
      (Theorem~\ref{thm:absorbed_existence_uniqueness}).
      Under mild regularity on $\sigma$, the SDE~\eqref{eq:dZ}
      admits a unique strong absorbed solution on $D=[a,b]$, which
      is a bounded uniformly integrable martingale.
\item \textbf{Bernoulli-Doob representation}
      (Theorems~\ref{Th:SDEasCondExp}
      and~\ref{Th:CondExpasSDE}).
      Every homogeneous diffusion martingale satisfying the above
      conditions admits a Bernoulli-Doob representation, i.e.,
      $Z_t=\E[B\mid\cF_t]$ for $B\sim\mathrm{Ber}(z)$.
      Conversely, every continuous time-homogeneous Markov
      Bernoulli-Doob martingale on a Brownian filtration arises
      from such a diffusion.

\item \textbf{Bernoulli distribution} (Corollary~\ref{cor:Bernoulli}).
The asymptotic Bernoulli behaviour is not confined to specific
examples, but is a general feature of smooth homogeneous
diffusions bounded in $[0,1]$. Moreover, in the Bernoulli--Doob
representation of Theorem~\ref{Th:SDEasCondExp}, the Bernoulli variable can be chosen
to be the terminal limit $B=Z_\infty$, and is therefore
$\cF_\infty$-measurable.

\item \textbf{Truly asymptotic Bernoulli limit}
      (Theorem~\ref{thm:positive_survival_hitting_time}).
      For any fixed $T<\infty$, $\Pr(\tau>T)>0$, where $\tau$ is
      the first hitting time of $\{a,b\}$. This shows that the
      Bernoulli distribution cannot be attained at any finite time,
      even when the individual boundaries are accessible. In other words, in the nondegenerate case $a<z<b$, the Bernoulli
      variable in the Bernoulli--Doob representation cannot be revealed at
      any deterministic finite time.

\item \textbf{Boundary classification}
      (Section~\ref{sec:boundaries}).
      We reconcile Feller's semigroup-based classification with the
      pathwise SDE framework and the martingale constraint, showing
      that the martingale property forces absorption at any
      attainable boundary.
\end{enumerate}

After the first version of this paper appeared on arXiv (14 July 2026, arXiv:2607.12365), Campbell and Engelund posted a related preprint \cite{CampbellEngelund2026} (22 July 2026). Their paper takes a complementary perspective: starting from an autonomous win-martingale, their terminology for our bounded homogeneous diffusion martingale, under the stronger assumption $\sigma \in C^1((0,1))$, they ask which binary sequential experiment generates it as an exact Bayesian posterior. Their main results (Theorems 3.2 and 4.3) provide a constructive embedding via a Lamperti transform and Doob $h$-transforms, and characterise when the posterior is a Markovian function of the current observation through a Riccati equation for the drift gap. These questions are not addressed in our paper. Conversely, our paper establishes the Bernoulli-Doob equivalence under the weaker Hölder-$1/2$ condition on $\sigma$, proves the truly asymptotic nature of the Bernoulli limit (Theorem~\ref{thm:positive_survival_hitting_time}, which has no analogue in their work), and develops the boundary classification reconciling Feller theory with the martingale constraint. The two papers are independent and complementary.

\medskip

The paper is organised as follows.
Section~\ref{sec:setup} introduces the setup and the existence and
uniqueness result.
Section~\ref{sec:boundaries} develops the boundary theory.
Section~\ref{sec:asymptotic} establishes Bernoulli convergence and
the Bernoulli-Doob representation.
Section~\ref{sec:examples} provides several explicit examples.
Section~\ref{sec:applications} briefly discusses connections with
credit-risk modelling.
Section~\ref{sec:BZinfty} provides an example of explicit $B$ for the $Z_t = \E[B|{\cal F}_t]$ representation. 
\ref{App:boundaries} recalls Feller's classification;
\ref{App:4examples} verifies the four examples; and
\ref{app:B} solves the Jacobi moment recursion.

% ===================================================================
\section{Setup, Existence, and Uniqueness}
\label{sec:setup}
% ===================================================================

We consider the driftless SDE~\eqref{eq:dZ} on the compact interval
$D=[a,b]$ under the following standing assumptions, in force
throughout the paper.

\begin{assumption}[Diffusion coefficient]\label{ass:DC} Unless stated otherwise, we consider the following assumptions about the diffusion coefficient in eq. \eqref{eq:dZ}:
\begin{enumerate}
\item[(A1)] $\sigma(a)=\sigma(b)=0$;
\item[(A2)] $\sigma(x)>0$ for all $x\in(a,b)$;
\item[(A3)] $\sigma$ is locally H\"older-$1/2$ continuous on
      $(a,b)$.
\end{enumerate}
\end{assumption}

\begin{remark}[Bounded local martingales are true
martingales]
\label{rem:bounded_mg}
Any solution to~\eqref{eq:dZ} is a continuous local martingale.
Since it is bounded in $[a,b]$, it is automatically a uniformly
integrable true martingale. We use this fact repeatedly below.
\end{remark}

\begin{theorem}[Existence and Uniqueness of the Absorbed Strong
Solution]
\label{thm:absorbed_existence_uniqueness}
Let $D=[a,b]$ be a compact interval and consider~\eqref{eq:dZ}
under Assumption \ref{ass:DC}. % \textnormal{(A1)--(A3)}. 
Define 
\[
\tau_x:=\inf\{t>0:Z_t=x\}\quad\text{and let}\quad \tau:=\tau_a\wedge\tau_b
\]
denote the first hitting time to a boundary. Then:
\begin{enumerate}
\item[\rm(a)] For every $z\in(a,b)$ there exists a unique strong
      solution of~\eqref{eq:dZ} on $[0,\tau)$.
\item[\rm(b)] The stopped process $Z^\tau_t:=Z_{t\wedge\tau}$
      is a strong solution of~\eqref{eq:dZ} on all of $[0,\infty)$.
\item[\rm(c)] If $z\in\{a,b\}$, the unique
      solution to~\eqref{eq:dZ} in $D$ is the constant process
      $Z_t\equiv z$.
\item[\rm(d)] ${Z^\tau}$ is the unique strong solution within
      the class of absorbed solutions on $D$.
\item[\rm(e)] ${Z^\tau}$ is a bounded uniformly integrable
      martingale.
\end{enumerate}
\end{theorem}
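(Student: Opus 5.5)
The plan is to localize on compact subintervals of $(a,b)$ and apply the Yamada--Watanabe theorem there, then patch the local solutions together up to the exit time $\tau$.

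\emph{Part (a).} Fix an exhausting sequence $[a_n,b_n]\uparrow(a,b)$ with $a<a_n<z<b_n<b$. On each $[a_n,b_n]$, (A3) gives a constant $C_n$ with $|\sigma(x)-\sigma(y)|\le C_n|x-y|^{1/2}$. Extend $\sigma|_{[a_n,b_n]}$ to a bounded, globally Hölder-$1/2$ function $\sigma_n$ on $\mathbb{R}$ by freezing it at the endpoint values outside $[a_n,b_n]$.

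\begin{itemize}
\item \emph{Existence for $\sigma_n$.} Since $\sigma_n$ is bounded and continuous, Skorokhod's theorem gives a weak solution.
\item \emph{Uniqueness for $\sigma_n$.} Since $\rho(u)=C_n^2u$ satisfies $\int_{0+}\rho(u)^{-1}du=\infty$, Yamada--Watanabe gives pathwise uniqueness.
\item \emph{Strong solution.} Weak existence together with pathwise uniqueness yields a unique strong solution $Z^{(n)}$.
\end{itemize}

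Let $\tau_n$ be the exit time of $Z^{(n)}$ from $(a_n,b_n)$. By pathwise uniqueness applied to stopped processes, $Z^{(n+1)}=Z^{(n)}$ on $[0,\tau_n]$, and $\tau_n\le\tau_{n+1}$. We then define $Z_t:=Z^{(n)}_t$ for $t\le\tau_n$ and $\tau:=\lim_n\tau_n$. This $Z$ is adapted to the Brownian filtration and solves \eqref{eq:dZ} on $[0,\tau)$. Any other solution on $[0,\tau)$ coincides with it on each $[0,\tau_n]$, which gives uniqueness.

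The hitting time defined in the statement agrees with $\lim\tau_n$ by path continuity.

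\emph{The main obstacle} is to show that $\lim_{t\uparrow\tau}Z_t$ exists and lies in $\{a,b\}$ when $\tau<\infty$, so that $Z^\tau$ is continuous at $\tau$.
\begin{itemize}
\item $Z$ is a continuous local martingale on $[0,\tau)$ with values in $(a,b)$, hence bounded.
\item Its quadratic variation $\langle Z\rangle_{\tau-}$ is therefore a.s.\ finite; indeed $\E\langle Z\rangle_{\tau_n}\le (b-a)^2$ uniformly in $n$.
\item Dambis--Dubins--Schwarz then gives $Z_t=z+\beta_{\langle Z\rangle_t}$, and $\beta$ is continuous on the finite interval $[0,\langle Z\rangle_{\tau-}]$, so $Z_{\tau-}$ exists.
\item On $\{\tau<\infty\}$ the process leaves every $[a_n,b_n]$, which forces $Z_{\tau-}\in\{a,b\}$.
\end{itemize}

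\emph{Part (b).} Set $Z^\tau_t=Z_{t\wedge\tau}$. Then
\[
Z^\tau_t=z+\int_0^{t\wedge\tau}\sigma(Z_s)\,dW_s=z+\int_0^t\sigma(Z^\tau_s)\,dW_s,
\]
because $\sigma(Z^\tau_s)=\sigma(a)=0$ or $\sigma(b)=0$ for $s\ge\tau$ by (A1). Hence $Z^\tau$ is a strong solution on $[0,\infty)$.

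\emph{Part (c).} The constant process $Z\equiv z$ with $z\in\{a,b\}$ solves the SDE by (A1). Conversely, any $D$-valued solution is a bounded continuous martingale (Remark~\ref{rem:bounded_mg}) with $\E Z_t=z$, which is an extreme point of $D$. Therefore $Z_t=z$ a.s.\ for each $t$, and by continuity $Z\equiv z$ indistinguishably.

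\emph{Part (d).} An absorbed solution is one that stays constant after hitting $\{a,b\}$. By (a), any such solution coincides with $Z$ on $[0,\tau)$, hence on $[0,\tau]$ by continuity, and it is constant afterwards. By the strong Markov/restart argument, (c) also shows that any $D$-valued solution is automatically absorbed after $\tau$, although uniqueness is only claimed within the absorbed class.

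\emph{Part (e).} $Z^\tau$ is a continuous local martingale by (b) and is bounded in $[a,b]$, so Remark~\ref{rem:bounded_mg} gives that it is a uniformly integrable martingale.
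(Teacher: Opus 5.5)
Your proof is correct and follows the same route as the paper: local Yamada--Watanabe on compact subintervals plus localization for (a), (A1) to freeze the stopped process for (b), the bounded-martingale/extreme-point argument for (c), pathwise uniqueness up to $\tau$ followed by absorption for (d), and boundedness for (e). Your extra details fill in steps the paper's proof leaves under ``standard localization arguments'': the globally H\"older-$1/2$ extension $\sigma_n$, Skorokhod weak existence combined with Yamada--Watanabe to get strong solutions, and the Dambis--Dubins--Schwarz argument showing that $Z_{\tau-}$ exists and lies in $\{a,b\}$.
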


\begin{proof}
\textbf{Interior dynamics (proving (a)).}
Let $z\in(a,b)$. For every compact subinterval
$K_\varepsilon=[a+\varepsilon,b-\varepsilon]\subset(a,b)$,
assumption~(A3) implies that $\sigma$ satisfies the
Yamada--Watanabe criterion on $K_\varepsilon$. Hence pathwise
uniqueness holds on each such compact set. Standard localization
arguments yield a unique strong solution up to the first exit time
from $(a,b)$, i.e., up to $\tau$.

\textbf{Boundary initialization (proving (c)).}
Suppose $z=a$. Since $\sigma(a)=0$, the constant process
$Z_t\equiv a$ satisfies the SDE. If $Z$ is any other solution
taking values in $D$, then $Z_t\ge a$ a.s. Every local martingale
bounded from below is a supermartingale (by Remark~\ref{rem:bounded_mg}
applied to $Z-a$), so $\E[Z_t]\le Z_0=a$. Since $Z_t-a\ge0$
a.s., we conclude $Z_t=a$ a.s.\ for all $t\ge0$. The argument for
$z=b$ is identical.

\textbf{Absorbed extension (proving (b)).}
If $\tau=\infty$ there
is nothing to prove. Otherwise, for $t\ge\tau$,
$Z^\tau_t=Z_\tau\in\{a,b\}$, so $\sigma(Z^\tau_t)=0$
by~(A1), and $dZ^\tau_t=0=\sigma(Z^\tau_t)\,dW_t$.
Hence ${Z^\tau}$ satisfies the SDE globally.

\textbf{Uniqueness among absorbed solutions
(proving (d)).}
Let $\widehat{Z}$ be another absorbed solution. Pathwise uniqueness
on $(a,b)$ gives $\widehat{Z}_t=Z^\tau_t$ for $t<\tau$. At
time $\tau$ both processes reach the same boundary point and remain
constant thereafter, so $\widehat{Z}_t=Z^\tau_t$ for all
$t\ge0$ a.s..

\textbf{Martingale property (proving (e)).}
The stopped process ${Z^\tau}$ is a bounded
local martingale. By Remark~\ref{rem:bounded_mg}, it is uniformly
integrable and therefore a true martingale.
\end{proof}

\begin{remark}[Pathwise admissibility versus diffusion startability]
\label{rem:deg}
When we say a boundary point is \emph{not startable}, this is meant
in the classical Feller sense: startability as a non-degenerate
diffusion with a Feller semigroup. This should not be confused with
pathwise admissibility of the SDE. For all examples in this paper,
if $\sigma(a)=0$, then the constant process $Z_t\equiv a$ is a
perfectly valid strong solution even if Feller classifies $a$ as
non-startable (see Theorem~\ref{thm:absorbed_existence_uniqueness}(c)).
Such solutions are absorbing and degenerate; they are not diffusions
if started at the boundary, and are therefore excluded from Feller's
diffusion classification, even though they are entirely acceptable
for our purposes and are included in our classification tables.
\end{remark}

% ===================================================================
\section{Boundary Behaviour of Bounded Martingales}
\label{sec:boundaries}
% ===================================================================

In the previous section, we looked at solutions
to~\eqref{eq:dZ} living in $D=[a,b]$, in the particular case where
$\sigma$ obeys Assumption \ref{ass:DC}. Let us now take a step back and analyse
the SDE~\eqref{eq:dZ} for more general $\sigma$ and $D$. In this
broader context, uniqueness of solutions may be lost, the boundaries
of the process may not coincide with the roots of $\sigma$, and the
behaviour of solutions at the zeros of $\sigma$ depends on the shape
of the diffusion coefficient. 

\subsection{Roots of $\sigma$, state space, and Feller classification}
\label{sec:roots}

We begin with a guiding example that
illustrates these subtleties, then state the general classification.

%\paragraph{Guiding example}
Consider~\eqref{eq:dZ} with $a=0$, $b=1$, and
\beq\label{ex:X1/4}
\sigma(x) = |x(1-x)|^{1/4}.
\eeq
This diffusion coefficient satisfies Assumption~\ref{ass:DC}. In particular, the H\"older exponent at $0$ and $1$ is $1/4<1/2$, proving $(A3)$. The maximal domain of this SDE is $A=\mathbb{R}$, since $\sigma$ is
defined everywhere. If we took $D=A=\mathbb{R}$,
the roots $0$ and $1$ would be interior equilibrium points. Indeed,
since $\sigma(x)=|x(1-x)|^{1/4}$ is H\"older-$1/2$ on both sides
of $0$ and $1$, the Yamada--Watanabe theorem guarantees a unique
strong solution on all of $\mathbb{R}$, and the local time at $0$
and $1$ is zero; the process can therefore cross these points
(see Remark~\ref{rem:nonunique} below). In order to force solutions
to be confined between the zeros, one must impose $D=[0,1]$.
We proceed in four steps.

\begin{enumerate}
\item \textit{Identify the zeros.} The points $x=0$ and $x=1$ are
      roots of $\sigma$. By
      Theorem~\ref{thm:absorbed_existence_uniqueness}(c), if $z=0$
      the unique solution in $D$ is $Z_t\equiv0$; similarly for
      $z=1$.

\item \textit{Define the state space $D=[0,1]$.} We restrict to
      processes confined between these equilibrium points. The
      roots transition from interior points of $A$ to topological
      boundaries of $D$. Imposing this state space
      restricts the set of admissible solutions.

\item \textit{Apply Feller's classification on $D$.} Because $0$
      and $1$ are now topological boundaries, Feller's theory
      applies. Evaluating the relevant integrals
      (~\ref{App:4examples}), both endpoints are classified
      as \emph{regular}.

\item \textit{Impose the martingale constraint.} As established by
      Proposition~\ref{prop:trilemma} below, the martingale
      property forces absorption at any attainable boundary. Of all
      the boundary behaviours Feller allows for regular boundaries
      (reflection, absorption, stickiness), only absorption is
      compatible with the bounded martingale requirement.
\end{enumerate}

A key principle emerges: \textit{a root of $\sigma$ is not
intrinsically a boundary}. If the state space is
not restricted to the interval delimited by the roots, solutions
can cross the zeros of $\sigma$, which then act as interior points
rather than boundaries. Its classification depends on the
definition of the state space. For instance, $\sigma(x)=\sqrt{|x|}$
with $D=\mathbb{R}$ has $x=0$ as an interior point; with
$D=[0,\infty)$ and $\sigma(x)=\sqrt{x}$, $x=0$ is a topological
boundary to which Feller's theory applies. In this paper, when
$\sigma$ has two roots $a<b$ and $z\in[a,b]$, we always take
$D=[a,b]$.

\begin{remark}[Non-uniqueness, local time, and boundary conditions]
\label{rem:nonunique}
The regularity of $\sigma$ at a root $x^*$ determines whether the
solution is unique beyond the first hitting time of $x^*$, and
whether $x^*$ acts as a boundary or an interior point.

\medskip
\noindent\textit{H\"older-$1/2$ on both sides: interior point.}
If $\sigma$ is H\"older-$1/2$ on both sides of $x^*$, the
Yamada--Watanabe theorem guarantees a unique strong solution on all
of $\mathbb{R}$, and $x^*$ is an interior point that can be
crossed. For example, $\sigma(x)=\sqrt{|x|}$ on $\mathbb{R}$ gives
a unique solution for which $x=0$ is interior.

\medskip
\noindent\textit{H\"older-$1/2$ on one side only: boundary.}
If $\sigma$ is defined and H\"older-$1/2$ only on one side of $x^*$
(e.g., $\sigma(x)=\sqrt{x}$ on $[0,\infty)$), then $x^*$ is a
topological boundary. Feller's theory classifies whether it is
attainable.

\medskip
\noindent\textit{Below H\"older-$1/2$: non-uniqueness and local
time.}
When $\sigma$ fails to be H\"older-$1/2$ at $x^*$ (e.g.,
$\sigma(x)=|x|^{1/4}$), pathwise uniqueness is not guaranteed
after the first hitting time of $x^*$. Multiple extensions of the
solution exist, parameterised by the local time coefficient
$\theta$ in
\begin{equation}
\label{eq:XL}
X_t = X_0 + \int_0^t\sigma(X_s)\,dW_s + \theta\,L_t^{x^*}(X)\,,
\end{equation}
where $L_t^{x^*}(X)$ is the local time of $X$ at $x^*$, defined
for any continuous semimartingale by
\[
L_t^{x^*}(X)
= \lim_{\varepsilon\downarrow0}\frac{1}{2\varepsilon}
  \int_0^t\ind_{|X_s-x^*|<\varepsilon}\,d\langle X\rangle_s
= \lim_{\varepsilon\downarrow0}\frac{1}{2\varepsilon}
  \int_0^t\ind_{|X_s-x^*|<\varepsilon}\sigma^2(X_s)\,ds\,.
\]
The four canonical choices are:
\begin{itemize}
\item $\theta=0$: absorption at $x^*$ (the unique solution to the
      SDE itself, since the local time term vanishes);
\item $\theta>0$: reflection at $x^*$;
\item $\theta<0$: $x^*$ is an interior crossing point;
\item $\theta=\infty$: sticky boundary (the process spends positive
      Lebesgue time at $x^*$).
\end{itemize}
The SDE~\eqref{eq:dZ} without additional boundary conditions
corresponds to $\theta=0$, selecting the absorbed solution. The
generator perspective is complementary: the operator
$\mathcal{L}f(x)=\frac{\sigma^2(x)}{2}f''(x)$ is not defined at
roots of $\sigma$, and specifying a boundary condition at $x^*$
corresponds to extending the domain of $\mathcal{L}$. Imposing
$f(x^*)=0$ gives absorption; imposing $f'(x^{*+})=0$ gives
reflection above $x^*$; imposing $f'(x^{*+})=f'(x^{*-})$ gives an
interior point; imposing $\mathcal{L}f(x^*)=kf''(x^*)$ gives a
sticky boundary. The SDE is thus the canonical representation of
the generator with absorption at $x^*$.

As a concrete illustration, the equation
$dX_t=\mathrm{sgn}(X_t)\,dW_t$ has no unique solution: one
solution behaves as a standard Brownian motion (crossing zero),
while others are absorbed at zero. All solutions share the same
quadratic variation $\langle X\rangle_t=t$, but differ in their
behaviour at zero. Imposing the absorbing boundary condition
uniquely selects the martingale solution. For our bounded
martingales, the martingale property independently forces
$\theta=0$, as established by
Proposition~\ref{prop:trilemma}.
\end{remark}

\subsection{The Martingale Trilemma}
\label{sec:trilemma}

The next proposition shows that the boundaries of a $[0,1]$-martingale are necessarily absorbing.

\begin{proposition}[Martingale Trilemma]
\label{prop:trilemma}
Let $Z$ be a continuous process on $[0,1]$ with $\sigma(0)=0$.
The following three conditions cannot hold simultaneously:
\begin{enumerate}
\item[\rm(a)] \textit{Startability:} $\Pr(Z_t>0)>0$ for some $t>0$ when
      $Z_0=0$.
\item[\rm(b)] \textit{Bounded domain:} $Z_t\ge0$ for all $t\ge0$.
\item[\rm(c)] \textit{Martingale property:} $\E[Z_t]=Z_0=0$ for all
      $t\ge0$.
\end{enumerate}
Consequently, within the class of processes satisfying
\textnormal{(b)} and \textnormal{(c)}, the only admissible boundary
behaviour at $0$ is absorption: $Z_t=0$ for all $t\ge\tau_0$ on
$\{\tau_0<\infty\}$.
\end{proposition}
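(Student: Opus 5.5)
The plan has two parts: a direct argument showing that (a), (b) and (c) cannot hold together, and the consequence about absorption obtained by applying the same reasoning after the hitting time $\tau_0$ via the strong Markov property or optional stopping.

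For the trilemma, assume (b) and (c) and set $Z_0=0$. By (b), $Z_t\ge0$ almost surely for every $t$. By (c), $\E[Z_t]=0$. A nonnegative random variable with zero mean vanishes almost surely, so $Z_t=0$ a.s. for each fixed $t$. This contradicts (a), which requires $\Pr(Z_t>0)>0$ for some $t$. By continuity of paths, the statement upgrades from "a.s. for each $t$" to "a.s. for all $t$ simultaneously": take a countable dense set of times, intersect the null sets, and extend by path continuity. The hypothesis $\sigma(0)=0$ plays no role in this step. It only ensures that the constant process is itself a solution, which is consistent with Theorem~\ref{thm:absorbed_existence_uniqueness}(c).

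For the consequence, work on $\{\tau_0<\infty\}$ with $Z$ a martingale in its own filtration (or in $\FF$) that satisfies (b). Since $\tau_0$ is a stopping time and $Z$ is a bounded martingale (Remark~\ref{rem:bounded_mg}), optional sampling gives, for each $s\ge0$,
\[
\E\bigl[Z_{\tau_0+s}\,\ind_{\{\tau_0<\infty\}}\bigr]=\E\bigl[Z_{\tau_0}\ind_{\{\tau_0<\infty\}}\bigr]=0 ,
\]
because $Z_{\tau_0}=0$ by continuity. I would justify this by applying optional stopping to the bounded stopping times $\tau_0\wedge n$ and $(\tau_0+s)\wedge n$, then letting $n\to\infty$ with dominated convergence. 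The quantity $Z_{\tau_0+s}$ is nonnegative on $\{\tau_0<\infty\}$, so $Z_{\tau_0+s}=0$ a.s. there. Using rational $s$ and path continuity then gives $Z_t=0$ for all $t\ge\tau_0$ on $\{\tau_0<\infty\}$.

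The main obstacle is interpretive rather than technical. Condition (c) is phrased only for $Z_0=0$, while the absorption claim concerns a process started elsewhere that later hits $0$. The point to make explicit is that the restarted process $(Z_{\tau_0+s})_{s\ge0}$ is again a nonnegative martingale, with respect to $(\cF_{\tau_0+s})$, and starts at $0$. This lets the first part apply verbatim. The restart is exactly what the optional sampling step provides, so no Markov assumption is needed. The truncation at $n$ handles the case where $\tau_0$ is unbounded.
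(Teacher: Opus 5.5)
Your argument is correct and is essentially the paper's: the trilemma reduces to the fact that a nonnegative random variable with zero mean vanishes a.s. The paper phrases this as a check on the pairs (a)+(b) and (a)+(c). For the absorption consequence, the paper's proof only asserts that once $Z$ reaches $0$ it must stay there. Your optional-sampling restart at $\tau_0$ supplies the step the paper leaves implicit; it is the same device the paper uses in Lemma~\ref{lemma:ZAbsorbingBoundaries}, and you are right that it needs the full martingale property rather than the constant-mean condition (c) alone.
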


\begin{proof}
Suppose (a) and (b) hold. Then $Z_t>0$ with positive probability
for some $t>0$, and $Z_t\ge0$ a.s., so $\E[Z_t]>0=Z_0$, violating
(c). Suppose (a) and (c) hold. Then $\E[Z_t]=0$ with $Z_t>0$ on a
set of positive probability, which forces $Z_t<0$ on a set of
positive probability, violating (b). Hence (a) must be abandoned:
once $Z$ reaches $0$, it must remain there.
\end{proof}

\begin{remark}\label{rem:CIR} %\textcolor{red}{[I have replaced $X_t$ by $\lambda_t$ here because it allows us to call Remark 4 in the Cox model part in Section 6.]}
%The Trilemma contrasts with the CIR process $dX_t=\kappa(\theta-X_t)\,dt+\eta\sqrt{X_t}\,dW_t$, where the drift $\kappa\theta\,dt>0$ at $X_t=0$ acts as a deterministic engine returning the process to the interior. Our driftless martingales have no such engine: the noise shuts off at the boundary and the process stops.
The Trilemma contrasts with the CIR process $d\lambda_t=\kappa(\theta-\lambda_t)\,dt+\eta\sqrt{\lambda_t}\,dW_t$, where the drift $\kappa\theta\,dt>0$ at $\lambda_t=0$ acts as a deterministic engine returning the process to the interior. Our driftless martingales have no such engine: the noise shuts off at the boundary and the process stops.

Imposing absorption does not violate the martingale property. The
absorbed process $Z_{t\wedge\tau_0}$ is the stopped process; by
Doob's Optional Stopping Theorem, if $Z_t$ is a true martingale in
the interior then $Z_{t\wedge\tau_0}$ is a true martingale for all
$t\ge0$. The expectation is preserved by the law of total
expectation:
\[
\E[Z_{t\wedge\tau_0}]
= \underbrace{\E[Z_t\mid\tau_0\le t]}_{=\,0}\,\Pr(\tau_0\le t)
+ \E[Z_t\mid\tau_0>t]\,\Pr(\tau_0>t) = z\,.
\]
Paths absorbed at $0$ contribute $0$; the surviving paths drift
upward to compensate.
\end{remark}

\subsection{Four boundary classification cases}
\label{sec:fourexamples}
We begin with an example that cements our earlier discussion on the importance of choosing the relevant state space from the start. 
\begin{example}
\label{ex:absval}
The case $\sigma(x)=\sqrt{x(1-x)}$ with $D=[0,1]$ gives a strong
pathwise unique solution with $0$ and $1$ as absorbing (exit)
boundaries. However, $\sigma(x)=\sqrt{|x(1-x)|}$ with $D=\mathbb{R}$
also gives a strong pathwise unique solution, but $0$ and $1$ are
interior points: the process can cross them. The difference is that
$\sqrt{|x(1-x)|}$ is H\"older-$1/2$ on both sides of $0$ and $1$
(the local time at these points is zero), so the process is not
absorbed. The shape of $\sigma$ near its roots, together with the
choice of state space, jointly determine whether the roots act as
boundaries.
\end{example}

Table~\ref{tab:examples-corrected} presents four examples with their
combined Feller and martingale classifications. Detailed
calculations are in~\ref{App:4examples}.

\begin{table}[h!]
\centering
\renewcommand{\arraystretch}{1.3}
\begin{tabular}{|c|c|c|}
\hline
Case $\sigma(x)$ & $x=0$ & $x=1$ \\ \hline
$(x(1-x))^{1/4}$
  & Regular: attainable \& startable
  & Regular: attainable \& startable \\
  & Absorbing
  & Absorbing \\ \hline
$x(1-x)$
  & Natural: inaccessible, not startable
  & Natural: inaccessible, not startable \\
  & ${}^\dagger$
  & ${}^\dagger$ \\ \hline
$\sqrt{x(1-x)}$
  & Exit: attainable, not startable
  & Exit: attainable, not startable \\
  & Absorbing
  & Absorbing \\ \hline
$x^{1/4}(1-x)$
  & Regular: attainable \& startable
  & Natural: inaccessible, not startable \\
  & Absorbing
  & ${}^\dagger$ \\ \hline
\end{tabular}
\caption{Combined Feller and martingale boundary
classification for four diffusion coefficients with $D=[0,1]$,
$\sigma(0)=\sigma(1)=0$. The Feller type (Regular/Exit/Natural) is
determined by the integrability of $\sigma^{-2}$; see~\ref{App:boundaries}. Imposing the martingale property
implies that all attainable boundaries are necessarily absorbing
(Proposition~\ref{prop:trilemma}). If $z\in\{0,1\}$, the unique
solution is $Z_t\equiv z$ for all $t$ (Theorem~\ref{thm:absorbed_existence_uniqueness}(c)).
Entries marked ${}^\dagger$ denote the case where $z\in\{0,1\}$
gives a degenerate constant solution that is pathwise valid but
falls outside Feller's diffusion classification; see
Remark~\ref{rem:deg}.}
\label{tab:examples-corrected}
\end{table}

\begin{remark}[Regular boundaries under the martingale constraint]
\label{rem:regular_clarification}
Table~\ref{tab:examples-corrected} includes Regular boundaries,
which might appear to contradict the statement that attainable
boundaries are always exit type for bounded martingales. The
resolution is as follows. Feller's classification is a property of
the generator and does not depend on the martingale constraint. A
Regular boundary is attainable \emph{and} startable in the Feller
sense. However, Proposition~\ref{prop:trilemma} shows that, once
the martingale property is imposed, the process cannot restart from
a boundary without violating either the domain constraint or the
zero-drift condition. Thus, Regular boundaries are \emph{realized
as absorbing} under the martingale constraint, behaving effectively
like Exit boundaries in terms of pathwise dynamics. The Feller type
describes the analytic properties of the generator; the realized
pathwise behaviour is determined by the combined Feller
classification and the martingale constraint.
\end{remark}

We now discuss each case.

\medskip\noindent\textbf{Case 1} ($\sigma(x)=(x(1-x))^{1/4}$, both
boundaries regular). This case was used earlier to illustrate the importance of clarifying the state space. We keep it here for completeness. Both boundaries are attainable from the
interior and may serve as initial values in the Feller sense.
Since $\sigma(0)=\sigma(1)=0$, the Martingale Trilemma forces
absorption upon hitting.

\medskip\noindent\textbf{Case 2} ($\sigma(x)=x(1-x)$, both
boundaries natural). Neither boundary is attainable from the
interior. An alternative proof uses the fact that $Z$ has the same
dynamics as the logistic transform of $X_t$, where
$dX_t=\frac{\eta^2}{2}\tanh(X_t/2)\,dt+\eta\,dW_t$ (see
Example~3.5 in~\cite{Vrins16}). Therefore
$\{Z_t\in\{0,1\}\}\Leftrightarrow\{X_t\in\{-\infty,+\infty\}\}$,
which has probability zero for all $t\ge0$. This case corresponds to the risk-neutral exchange rate process (bounded Brownian motion) considered in~\cite{Ingersoll97} when $a=0$ and $b=\sigma(t,T)=1$.

\medskip\noindent\textbf{Case 3} ($\sigma(x)=\sqrt{x(1-x)}$,
Jacobi martingale, both boundaries exit). Both boundaries are
attainable with probability one; the martingale requirement forces
absorption upon hitting.

\medskip\noindent\textbf{Case 4} ($\sigma(x)=x^{1/4}(1-x)$,
mixed). The left boundary $0$ is regular (attainable, absorbed by
the martingale constraint). The right boundary $1$ is natural
(inaccessible).

\subsection{Absorbing boundaries}
\label{sec:absorbing}

\begin{lemma}
\label{lemma:ZAbsorbingBoundaries}
Any boundary point reached by a martingale $Z$ bounded in $[0,1]$
is absorbing. More precisely, let
$\tau_b:=\inf\{t\ge0:Z_t=b\}$, $b\in\{0,1\}$. Then, on
$\{\tau_b<\infty\}$, $Z_t=b$ for all $t\ge\tau_b$. Moreover, if
$Z$ is a time-homogeneous diffusion with coefficient $\sigma(Z_t)$,
then $\sigma(b)=0$.
\end{lemma}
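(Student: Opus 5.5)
The plan is to use optional stopping at $\tau_b$ together with the strong Markov–free structure of martingales, i.e.\ a purely martingale argument, and then derive $\sigma(b)=0$ from the quadratic variation.

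\textbf{Step 1 (absorption at $0$).} Since $Z$ is bounded in $[0,1]$, it is a uniformly integrable martingale (Remark~\ref{rem:bounded_mg}), so optional stopping applies to any stopping times. For $t\ge0$ consider the bounded stopping times $\tau_0\wedge n$ and $(\tau_0+t)\wedge n$. Optional stopping gives
\[
\E\bigl[Z_{(\tau_0+t)\wedge n}\,\ind_{\{\tau_0\le n\}}\bigr]=\E\bigl[Z_{\tau_0\wedge n}\,\ind_{\{\tau_0\le n\}}\bigr]=0,
\]
because $\{\tau_0\le n\}\in\cF_{\tau_0\wedge n}$ and $Z_{\tau_0}=0$ by path continuity (the infimum is attained since $\{Z=0\}$ is closed). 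As $Z\ge0$, the integrand vanishes a.s. Letting $n\to\infty$ and taking a countable dense set of $t$, continuity of paths yields $Z_t=0$ for all $t\ge\tau_0$ on $\{\tau_0<\infty\}$. This is the rigorous form of the Trilemma (Proposition~\ref{prop:trilemma}) applied after restarting at $\tau_0$.

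\textbf{Step 2 (absorption at $1$).} Apply Step~1 to the martingale $1-Z$, which is also bounded in $[0,1]$ and hits $0$ exactly at $\tau_1$.

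\textbf{Step 3 ($\sigma(b)=0$).} If $Z$ solves $dZ_t=\sigma(Z_t)dW_t$ and $\Pr(\tau_b<\infty)>0$, then on that event $Z$ is constant after $\tau_b$, so its quadratic variation satisfies
\[
0=\langle Z\rangle_{\tau_b+t}-\langle Z\rangle_{\tau_b}=\int_{\tau_b}^{\tau_b+t}\sigma^2(Z_s)\,ds=t\,\sigma^2(b)
\]
for all $t>0$, forcing $\sigma(b)=0$.

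\textbf{Main obstacle.} The delicate point is the interpretation of the last claim when $b$ is never reached. If $\Pr(\tau_b<\infty)=0$ the quadratic variation argument gives nothing, so I would state the conclusion as holding whenever $b$ is attained with positive probability, consistent with ``boundary point reached''.

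Alternatively, in the setting of a diffusion defined on $D=[0,1]$ including the case started at $b$, I would use $Z_0=b$ directly. Step~1 with $\tau_b=0$ gives $Z\equiv b$, hence $\int_0^t\sigma^2(b)\,ds=0$. This bypasses attainability entirely, and I would present this version to cover both readings.
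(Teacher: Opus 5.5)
Your proof is correct and follows the same route as the paper. Both use optional stopping at the hitting time, together with the fact that a variable bounded by $b$ whose conditional expectation equals $b$ must equal $b$; both then apply the quadratic-variation argument $\int_{\tau_b}^{\tau_b+t}\sigma^2(Z_s)\,ds=t\,\sigma^2(b)=0$. Your bounded stopping times $\tau_0\wedge n\le(\tau_0+t)\wedge n$ make the paper's informal ``$\E[Z_t\mid\cF_{\tau_1}]=1$ for $t\ge\tau_1$'' rigorous, and your caveat that $\sigma(b)=0$ requires $\Pr(\tau_b<\infty)>0$ is exactly the reading the paper uses in Theorem~\ref{Th:CondExpasSDE}.
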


\begin{proof}
Focus on $b=1$; the case $b=0$ is analogous. For all $t\ge\tau_1$,
$Z_t\le1$ a.s. By the Optional Stopping Theorem,
$\E[Z_t\mid\cF_{\tau_1}]=Z_{\tau_1}=1$. Hence $Z_t$ is a bounded
random variable whose conditional expectation given $\cF_{\tau_1}$
equals its upper bound $1$, forcing $Z_t=1$ a.s.\ on
$\{\tau_1<\infty\}$ for all $t\ge\tau_1$. If $\sigma(1)\ne0$, then
$d\langle Z\rangle_t=\sigma^2(1)\,dt>0$, contradicting the fact
that $Z_t$ is stuck at $1$.
\end{proof}

% ===================================================================
\section{Asymptotic Law and Bernoulli-Doob Representation}
\label{sec:asymptotic}
% ===================================================================

In the previous section, we discussed the
possibility for a process $Z$ solving~\eqref{eq:dZ} to reach the
boundaries. In this section, we investigate the asymptotic law of
$Z$. We find that the limit is a Bernoulli, independently of the
boundary classification. Moreover, we establish the equivalence
between such processes and Bernoulli-Doob martingales.

\subsection{Variance-based proof of Bernoulli convergence}
\label{sec:variance}

\begin{remark}[Fokker-Planck heuristic]
\label{rem:FP}
Before the rigorous proof, we offer an informal but illuminating
argument via the Fokker-Planck equation. This requires the
additional assumption $\sigma\in C^2$ and a formal interchange of
limit and differentiation that we do not justify here; the rigorous
proof follows in Proposition~\ref{prop:variance} and
Corollary~\ref{cor:Bernoulli} below.

Suppose $Z$ admits a stationary density in the sense that
$\lim_{t\to\infty}p(t,x)=p^\star(x)$ for some generalised density
$p^\star$. The Fokker-Planck equation reads
\[
\frac{\partial p(t,x)}{\partial t}
= \frac{\partial^2[\sigma^2(x)p(t,x)]}{\partial x^2}\,.
\]
Setting the left-hand side to zero gives
$\sigma^2(x)p^\star(x)=k_1 x+k_2$ for constants $k_1,k_2$. Since
$p^\star\ge0$ and $\sigma^2(a)=\sigma^2(b)=0$, we need
$k_1 a+k_2=0$ and $k_1 b+k_2=0$, which forces $k_1=k_2=0$. Hence
$p^\star(x)=0$ for all $x\in(a,b)$, so $p^\star$ is supported
entirely on $\{a,b\}$:
\[
p^\star(x) = k\,\delta(x-a) + (1-k)\,\delta(x-b)\,.
\]
The martingale condition $\int x\,p^\star(x)\,dx=z$ gives
$k=1-(z-a)/(b-a)$, which is precisely the generalised Bernoulli
$\mathcal{B}(a,b,z)$. The intuition is transparent: the only affine
function that is non-negative and vanishes at both $a$ and $b$ is
the zero function, so the stationary density must be supported
entirely on the boundary.
\end{remark}

\begin{proposition}[Variance characterization of Bernoulli
convergence]
\label{prop:variance}
Under assumptions \textnormal{(A1)--(A3)}, let
$v(t):=\var[Z_t]$.
\begin{enumerate}
\item[\rm(i)] \textnormal{(Max-variance law.)}
      The generalised Bernoulli $\mathcal{B}(a,b,z)$ is the unique
      maximum-variance distribution on $[a,b]$ with mean $z$:
      \[
      \mathcal{B}(a,b,z)
      = \argmax_{\mathcal{L}\in\mathcal{P}(a,b,z)}\var[\mathcal{L}]\,,
      \]
      where $\mathcal{P}(a,b,z)$ is the set of distributions on
      $[a,b]$ with mean $z$.
\item[\rm(ii)] \textnormal{(Variance strictly increasing.)}
      $v(t)$ is strictly increasing as long as
      $\Pr(Z_t\in\{a,b\})<1$.
\end{enumerate}
\end{proposition}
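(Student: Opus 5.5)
For part (i), I will use the elementary pointwise bound on $[a,b]$. For any $x\in[a,b]$ we have $(x-a)(b-x)\ge0$, which rearranges to $x^2\le (a+b)x-ab$. Now take any law $\mathcal{L}\in\mathcal{P}(a,b,z)$ with a random variable $X\sim\mathcal{L}$. Taking expectations gives
\[
\E[X^2]\le (a+b)z-ab,
\]
and therefore $\var[X]\le (a+b)z-ab-z^2=(z-a)(b-z)$.

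Equality holds if and only if $(X-a)(b-X)=0$ a.s., since this quantity is a nonnegative random variable with zero mean. That means $X\in\{a,b\}$ a.s. Combined with the mean constraint $\E[X]=z$, this forces $\Pr(X=b)=(z-a)/(b-a)$. So the maximiser exists, equals $\mathcal{B}(a,b,z)$, and is unique. One should also check that $\mathcal{B}(a,b,z)$ has variance exactly $(z-a)(b-z)$, which is immediate.

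For part (ii), I will work with the absorbed strong solution $Z$ of Theorem~\ref{thm:absorbed_existence_uniqueness}, which is a bounded martingale by part (e). Since $\E[Z_t]=z$ for all $t$, we have $v(t)=\E[Z_t^2]-z^2$. By It\^o's formula, $Z_t^2=z^2+2\int_0^t Z_s\sigma(Z_s)\,dW_s+\int_0^t\sigma^2(Z_s)\,ds$. The stochastic integral is a true martingale because its integrand is bounded: $\sigma$ is continuous on the compact set $D$. I will take continuity of $\sigma$ at the endpoints as implicit in (A1)--(A3); otherwise I would localize and use the bound $d\langle Z\rangle$ together with the boundedness of $Z$, via $\E[\langle Z\rangle_t]=\E[(Z_t-z)^2]\le (b-a)^2$. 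This yields
\[
v(t)=\E\int_0^t\sigma^2(Z_s)\,ds=\int_0^t \E[\sigma^2(Z_s)]\,ds
\]
by Tonelli. Hence for $s<t$ we get $v(t)-v(s)=\int_s^t\E[\sigma^2(Z_u)]\,du$, which is nonnegative.

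To obtain strict increase, fix $s$ with $\Pr(Z_s\in\{a,b\})<1$, which also covers all earlier times. By (A2), $\sigma^2(Z_u)>0$ exactly on $\{Z_u\in(a,b)\}$. So it suffices to show that $\Pr(Z_u\in(a,b))>0$ for $u$ in a set of positive measure within $[s,t]$.

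This is where the main care is needed. By Lemma~\ref{lemma:ZAbsorbingBoundaries}, boundaries are absorbing, so $\{Z_u\in(a,b)\}=\{\tau>u\}$ is decreasing in $u$. I will use right-continuity of paths: on $\{\tau>s\}$, which has positive probability by hypothesis, the path stays in the open interval $(a,b)$ on a neighbourhood $[s,s+\delta(\omega))$. Then $\Pr(\tau>s+\delta)\to\Pr(\tau>s)>0$ as $\delta\downarrow0$, so $\Pr(\tau>u)>0$ for all $u$ in some interval $[s,s+\delta_0]$.

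On that interval, $\E[\sigma^2(Z_u)]\ge\E[\sigma^2(Z_u)\ind_{\{\tau>u\}}]>0$, because the integrand is strictly positive on an event of positive probability. Consequently $v(t)>v(s)$ for every $t>s$, i.e.\ $v$ is strictly increasing on the range of times where $\Pr(Z_t\in\{a,b\})<1$. This is the claimed statement, and in the nondegenerate case it holds on all of $[0,\infty)$ once Theorem~\ref{thm:positive_survival_hitting_time} is available.
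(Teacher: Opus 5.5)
Your proposal is correct and follows essentially the paper's route: for (i) a pointwise quadratic bound (you use $(x-a)(b-x)\ge0$ directly on $[a,b]$, while the paper uses $x^2\le x$ after rescaling to $[0,1]$), and for (ii) the identity $v(t)=\int_0^t\E[\sigma^2(Z_s)]\,ds$ combined with (A2). Your extra care in (ii) makes rigorous what the paper states in one line: you use absorption and $\Pr(\tau>s+\delta)\uparrow\Pr(\tau>s)$ to obtain a strictly positive integrand on a set of positive Lebesgue measure, and you give the fallback $\E[\langle Z\rangle_t]\le(b-a)^2$ in case $\sigma$ is not assumed continuous at the endpoints.
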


\begin{proof}
\textbf{(i)} We prove for $a=0$, $b=1$; the general case follows
by shifting and scaling. Let $X$ have law
$\mathcal{L}_X\in\mathcal{P}(0,1,z)$ and
$Y\sim\mathcal{B}(0,1,z)$, so $\E[X]=\E[Y]=z$. Then
\[
\var[X]
= \int_0^1 x^2\,\mu(dx) - z^2
\le \int_0^1 x\,\mu(dx) - z^2
= z(1-z) = \var[Y]\,,
\]
since $x^2\le x$ on $[0,1]$, with equality if and only if
$x\in\{0,1\}$. Equality holds if and only if $\mu$
assigns all mass to $\{0,1\}$ with mass $z$ at $1$, i.e., if and
only if $\mathcal{L}_X=\mathcal{B}(0,1,z)$.

\textbf{(ii)} By It\^o's isometry,
\[
v(t) = \int_0^t\E[\sigma^2(Z_s)]\,ds\,.
\]
The integrand is strictly positive unless $\Pr(\sigma^2(Z_s)=0)=1$,
i.e., unless $Z_s\sim\mathcal{B}(a,b,z)$ (in which case paths are
frozen at the bounds for all $s'\ge s$). Hence $v$ increases
strictly until the distribution becomes Bernoulli.
\end{proof}

\begin{corollary}[Almost sure Bernoulli convergence]
\label{cor:Bernoulli}
Under Assumption~\ref{ass:DC}, $Z_t\to\{a,b\}$ a.s.\
as $t\to\infty$, and the limit law is $\mathcal{B}(a,b,z)$.
\end{corollary}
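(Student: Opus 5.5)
We combine the martingale convergence theorem with the variance identity from Proposition~\ref{prop:variance}(ii). We work with the absorbed solution $Z=Z^\tau$ of Theorem~\ref{thm:absorbed_existence_uniqueness}. If $z\in\{a,b\}$, part (c) of that theorem makes the statement trivial, so assume $a<z<b$.

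\textbf{Step 1: existence of the limit.} $Z$ is a bounded martingale by Theorem~\ref{thm:absorbed_existence_uniqueness}(e). The martingale convergence theorem therefore gives a limit $Z_\infty\in[a,b]$ with $Z_t\to Z_\infty$ a.s. and in $L^2$. Uniform integrability gives $\E[Z_\infty]=z$.

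\textbf{Step 2: the limit sits on the boundary.} By It\^o's isometry, as in the proof of Proposition~\ref{prop:variance}(ii),
\[
\int_0^\infty \E[\sigma^2(Z_s)]\,ds=\lim_{t\to\infty}v(t)=\var[Z_\infty]\le (b-a)^2/4<\infty .
\]
By Fubini this means $\int_0^\infty\sigma^2(Z_s)\,ds<\infty$ a.s. Suppose, for contradiction, that $\Pr(Z_\infty\in(a,b))>0$. Then there is $\varepsilon>0$ with $\Pr(Z_\infty\in[a+2\varepsilon,b-2\varepsilon])>0$. On that event, $Z_s\in[a+\varepsilon,b-\varepsilon]=K_\varepsilon$ for all large $s$. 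By continuity and (A2), $\sigma^2\ge c_\varepsilon>0$ on $K_\varepsilon$, so $\int_0^\infty\sigma^2(Z_s)\,ds=\infty$ on an event of positive probability, a contradiction. An equivalent way to phrase this is that $\langle Z\rangle_\infty<\infty$ a.s., since $Z$ is an $L^2$-bounded martingale, while $\langle Z\rangle_\infty=\infty$ on $\{Z_\infty\in(a,b)\}$. Hence $Z_\infty\in\{a,b\}$ a.s.

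\textbf{Step 3: identify the law.} A random variable supported on $\{a,b\}$ with mean $z$ must equal $b$ with probability $(z-a)/(b-a)$. This is exactly $\mathcal{B}(a,b,z)$, which is consistent with the max-variance characterisation in Proposition~\ref{prop:variance}(i).

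\textbf{Main obstacle.} The key step is Step 2. One must take care to use the positive lower bound of $\sigma^2$ only on compact subintervals of $(a,b)$, since near the endpoints $\sigma$ may degenerate. Continuity of $\sigma$ on $[a,b]$ is implicit in (A1)--(A3); if it were not available, local boundedness away from zero on compacts would be needed instead. A note on Proposition~\ref{prop:variance}(ii): it shows that $v$ is strictly increasing, but that alone does not force convergence to the maximum variance. This is why the argument goes through the quadratic variation rather than relying on monotonicity of $v$.
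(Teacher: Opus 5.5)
Your proof is correct. It shares its raw ingredients with the paper's proof: the identity $v(t)=\int_0^t\E[\sigma^2(Z_s)]\,ds$ with $v$ bounded, and the martingale convergence theorem. The decisive step, however, is carried out differently.

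The paper works at the level of marginal laws. From $v$ being increasing and bounded it infers $\E[\sigma^2(Z_t)]\to0$, declares this equivalent to $\Pr(Z_t\in\{a,b\})\to1$, and then transfers the conclusion to $Z_\infty$. As your closing note anticipates, monotonicity and boundedness only yield $\int_0^\infty\E[\sigma^2(Z_s)]\,ds<\infty$; they do not force the integrand to vanish. The stated equivalence also fails: for the $\Phi$-martingale, $\E[\sigma^2(Z_t)]\to0$ while $\Pr(Z_t\in\{0,1\})=0$ for every $t$. That argument therefore needs a repair, for instance choosing $t_n\to\infty$ with $\E[\sigma^2(Z_{t_n})]\to0$ and applying Fatou on $\{Z_\infty\in(a,b)\}$.

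Your pathwise route avoids both issues. You use Tonelli to get $\langle Z\rangle_\infty=\int_0^\infty\sigma^2(Z_s)\,ds<\infty$ a.s., and then show this integral diverges on $\{Z_\infty\in[a+2\varepsilon,b-2\varepsilon]\}$ because $\inf_{K_\varepsilon}\sigma^2>0$. This needs only continuity and positivity of $\sigma$ on $(a,b)$, which (A2)--(A3) supply.

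One minor inaccuracy: (A1)--(A3) do not make $\sigma$ continuous at the endpoints, only on $(a,b)$. Your argument never needs endpoint continuity, though. Moreover, $\E[(Z_t-z)^2]=\E\langle Z\rangle_t$ holds for any bounded continuous martingale, so a possible blow-up of $\sigma$ near $a$ or $b$ is harmless.
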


\begin{proof}
Since $v(t)$ is strictly increasing and bounded above by
$(b-z)(z-a)$, we have $\E[\sigma^2(Z_t)]\to0$ as $t\to\infty$.
This is equivalent to $\Pr(Z_t\in\{a,b\})\to1$. Since $Z$ is a
bounded martingale, it converges a.s.\ to an integrable limit
$Z_\infty$ (see e.g.\ \cite{Cohen15}, Th.~5.2.1). By the above,
$Z_\infty\in\{a,b\}$ a.s., and $\E[Z_\infty]=z$ gives the Bernoulli
weights.
\end{proof}

\subsection{Bernoulli-Doob representation theorems}
\label{sec:representation}

\begin{theorem}[Bounded homogeneous diffusions are Bernoulli-Doob
martingales]
\label{Th:SDEasCondExp}
Let $(\Omega,\cF,\FF,\Pr)$ satisfy the usual conditions with
$\FF=(\cF_t)_{t\ge0}$ and $\cF=\cF_\infty$. Let $W$ be a Brownian
motion and $Z=(Z_t)_{t\ge0}$ take values in $[0,1]$ and solve
\[
dZ_t = \sigma(Z_t)\,dW_t\,,\qquad Z_0=z\in[0,1]\,,
\]
where $\sigma:[0,1]\to[0,\infty)$ is continuous, strictly positive
on $(0,1)$, and satisfies $\sigma(0)=\sigma(1)=0$. Then $Z$ is a
continuous Bernoulli-Doob martingale:
\[
Z_t = \E[B\mid\cF_t]\,,\qquad B\sim\mathrm{Ber}(z)\,,\quad t\ge0\,.
\]
\end{theorem}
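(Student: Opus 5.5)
The plan is to take $B:=Z_\infty$, the almost sure limit of $Z$, and to check that it has the required properties. In order, I will show that $Z$ is a uniformly integrable martingale, that $Z_\infty$ exists and closes the martingale, and that $Z_\infty$ is Bernoulli with parameter $z$.

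\textbf{Step 1: $Z$ is a uniformly integrable martingale.} Since $Z$ solves the driftless SDE, it is a continuous local martingale. It is bounded in $[0,1]$, so by Remark~\ref{rem:bounded_mg} it is a uniformly integrable true martingale.

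\textbf{Step 2: $Z_\infty$ exists and closes the martingale.} By the martingale convergence theorem, $Z_t\to Z_\infty$ a.s.\ and in $L^1$. By Doob's closure theorem for uniformly integrable martingales, $Z_t=\E[Z_\infty\mid\cF_t]$ for every $t\ge0$. Here $Z_\infty$ is $\cF_\infty=\cF$-measurable.

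\textbf{Step 3: $Z_\infty\in\{0,1\}$ a.s.} Corollary~\ref{cor:Bernoulli} gives this directly. However, it was stated under (A3), and the present theorem assumes only continuity of $\sigma$, so I would re-derive the fact using continuity and positivity alone. By It\^o's isometry,
\[
\E[(Z_t-z)^2]=\E\Big[\int_0^t\sigma^2(Z_s)\,ds\Big]\le 1 .
\]
Letting $t\to\infty$ and using monotone convergence gives $\int_0^\infty\sigma^2(Z_s)\,ds<\infty$ a.s.

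Now suppose, for contradiction, that on an event of positive probability $Z_\infty=c$ with $c\in(0,1)$. By continuity and positivity of $\sigma$ at $c$, there is $\delta>0$ with $\sigma^2(Z_s)\ge\delta$ for all large $s$ on that event. This forces $\int_0^\infty\sigma^2(Z_s)\,ds=\infty$ there, a contradiction. The same argument can be phrased through the Dambis--Dubins--Schwarz time change: $\langle Z\rangle_\infty<\infty$ exactly when $Z$ converges, and $\langle Z\rangle$ keeps growing while $Z$ stays near an interior point. Either version shows $Z_\infty\in\{0,1\}$ a.s.

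\textbf{Step 4: the law of $Z_\infty$.} From $\E[Z_\infty]=\E[Z_0]=z$, it follows that $Z_\infty\sim\mathrm{Ber}(z)$.

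\textbf{Edge cases.} If $z\in\{0,1\}$, Lemma~\ref{lemma:ZAbsorbingBoundaries} (or Theorem~\ref{thm:absorbed_existence_uniqueness}(c)) gives $Z\equiv z$. Then $B=z$ is a degenerate Bernoulli, and the representation holds trivially.

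\textbf{Main obstacle.} The critical step is Step 3, identifying the support of the limit using only continuity of $\sigma$. The key fact is finiteness of the total quadratic variation $\int_0^\infty\sigma^2(Z_s)\,ds$. The only subtlety there is justifying the limit $t\to\infty$ in the isometry, which monotone convergence handles. Everything else is standard closure of uniformly integrable martingales.
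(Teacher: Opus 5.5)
Your proposal is correct, and its skeleton matches the paper's proof. You take $B:=Z_\infty$, use uniform integrability to get $Z_t=\E[Z_\infty\mid\cF_t]$, show $Z_\infty\in\{0,1\}$ a.s., and read off the parameter from $\E[Z_\infty]=z$.

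The genuine difference is in the support step. The paper simply invokes Corollary~\ref{cor:Bernoulli}, i.e.\ the variance argument: $v(t)=\int_0^t\E[\sigma^2(Z_s)]\,ds$ is increasing and bounded by $z(1-z)$, together with the max-variance characterization of $\mathcal{B}(0,1,z)$. You argue pathwise instead. The total quadratic variation $\int_0^\infty\sigma^2(Z_s)\,ds$ is a.s.\ finite, while on $\{Z_\infty\in(0,1)\}$ continuity and positivity of $\sigma$ keep $\sigma^2(Z_s(\omega))$ eventually above some $\delta(\omega)>0$. Your phrase ``$Z_\infty=c$ on an event of positive probability'' should be read as ``$Z_\infty\in(0,1)$'', with $c$ and $\delta$ depending on $\omega$. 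Since the argument is pathwise, nothing more is needed.

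What the paper's route buys is the link to the max-variance interpretation of the Bernoulli law, which it uses elsewhere. What yours buys is twofold:
\begin{itemize}
\item It uses exactly the theorem's hypotheses (continuity of $\sigma$, positivity on $(0,1)$). Corollary~\ref{cor:Bernoulli} is stated under Assumption~\ref{ass:DC}, including the H\"older condition (A3), which this theorem does not assume.
\item It is tighter. The bound on $v$ only yields $\int_0^\infty\E[\sigma^2(Z_s)]\,ds<\infty$. Passing from that to $\E[\sigma^2(Z_\infty)]=0$ needs precisely the a.s.\ convergence of $Z$, continuity of $\sigma$ and bounded convergence, which you use directly.
\end{itemize}
Your argument also avoids a false claim in the corollary's proof: that $\E[\sigma^2(Z_t)]\to0$ is equivalent to $\Pr(Z_t\in\{0,1\})\to1$. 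This fails for the $\Phi$-martingale, where $\Pr(Z_t\in\{0,1\})=0$ for every $t$.

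Your separate treatment of $z\in\{0,1\}$ is harmless but redundant, since the general argument already covers it.
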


\begin{proof}
By Remark~\ref{rem:bounded_mg}, $Z$ is a true
martingale. By the martingale convergence theorem, there exists
$Z_\infty\in L^1$ with $Z_t\to Z_\infty$ a.s.\ and in $L^1$
\cite[Problem~3.19]{Kara05}. Moreover \cite[Problem~3.20]{Kara05},
there exists $X\in L^1$ with $Z_t=\E[X\mid\cF_t]$ and
$\E[X\mid\cF]=Z_\infty$ a.s. The tower law gives
$Z_t=\E[Z_\infty\mid\cF_t]$. By Corollary~\ref{cor:Bernoulli} with $(a,b)=(0,1)$, $Z_\infty\in\{0,1\}$ a.s. Setting $B:=Z_\infty$ and noting $\E[B]=\E[Z_\infty]=z$, we conclude $B\sim\mathrm{Ber}(z)$.
\end{proof}

\begin{theorem}[Characterization by Bernoulli variables]
\label{Th:CondExpasSDE}
Let $(\Omega,\cF,\FF,\Pr)$ satisfy the usual conditions with
$\cF=\cF_\infty$. Define $Z_t=\E[B\mid\cF_t]$ where
$B\sim\mathrm{Ber}(z)$. Then $Z$ is an $\FF$-martingale bounded
in $[0,1]$ with $Z_0=z$.

Moreover, suppose $\FF$ is a Brownian filtration and $Z$ is a
continuous time-homogeneous Markov process. Then $Z$
can be written as a homogeneous diffusion
\[
dZ_t = \sigma(Z_t)\,dW_t\quad\text{where}\quad Z_0=z
\]
for some $\sigma:[0,1]\to[0,\infty)$ and an $\FF$-Brownian motion
$W$. Furthermore, $\sigma(b)=0$ at any boundary $b\in\{0,1\}$
reachable in finite time.
\end{theorem}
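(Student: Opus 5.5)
The first claim is elementary, and I will dispose of it first. Since $0\le B\le 1$ a.s., monotonicity of conditional expectation gives $0\le Z_t\le 1$ a.s. The tower property gives $\E[Z_t\mid\cF_s]=\E[B\mid\cF_s]=Z_s$ for $s\le t$. Integrability is immediate from boundedness. For the initial value, $Z_0=\E[B\mid\cF_0]$, and this equals $\E[B]=z$ provided $\cF_0$ is trivial. That holds for the Brownian filtration case, and I will state it as the implicit convention in general. By the usual conditions we may also take a c\`adl\`ag modification.

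For the second part I will use martingale representation. Since $\FF$ is a Brownian filtration generated by some $\FF$-Brownian motion $W$, and $Z$ is a square-integrable martingale, the Brownian martingale representation theorem yields a progressively measurable $H$ with $\int_0^t\E[H_s^2]\,ds<\infty$ and $Z_t=z+\int_0^t H_s\,dW_s$. In particular $Z$ is continuous, with $d\langle Z\rangle_t=H_t^2\,dt$. The remaining task is to show that $H_t^2$ (more precisely $|H_t|$, up to a choice of sign absorbed into $W$) is a deterministic function of $Z_t$. Here I will use the Markov and time-homogeneity assumptions. The quadratic variation is a pathwise additive functional of $Z$, namely the limit in probability of sums of squared increments. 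Time-homogeneity and the Markov property then make $\langle Z\rangle$ a continuous additive functional of the homogeneous Markov process $Z$, and it is absolutely continuous with respect to $dt$. By the standard characterization of such additive functionals, for instance the local quadratic variation limit $\lim_{h\downarrow0}h^{-1}\E_x[(Z_h-x)^2]$, the density has the form $\sigma^2(Z_t)$ for a Borel function $\sigma^2\ge0$ on $[0,1]$. Concretely, I would define $\sigma^2(x):=\lim_{h\downarrow0}h^{-1}\E[(Z_{t+h}-Z_t)^2\mid Z_t=x]$. By homogeneity this does not depend on $t$, and by Lebesgue differentiation of $\E[\int_t^{t+h}H_s^2ds\mid\cF_t]$ it equals $H_t^2$ for a.e.\ $t$ a.s. I then set $\sigma:=\sqrt{\sigma^2}$ and $\widetilde W_t:=\int_0^t\sign(H_s)\,dW_s$, with the convention $\sign(0)=1$. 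By L\'evy's characterization $\widetilde W$ is an $\FF$-Brownian motion, and $dZ_t=|H_t|\,d\widetilde W_t=\sigma(Z_t)\,d\widetilde W_t$. This identification of $H_t^2$ with $\sigma^2(Z_t)$ is the main obstacle. The delicate points are existence of the limit and measurability in $x$, and I expect to lean on the additive-functional representation (Revuz correspondence) or simply take it as the meaning of ``homogeneous diffusion''.

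Finally, the boundary claim follows from Lemma~\ref{lemma:ZAbsorbingBoundaries}. If $b\in\{0,1\}$ is reached in finite time with positive probability, then $Z_t=b$ for all $t\ge\tau_b$ on $\{\tau_b<\infty\}$. Hence $\int_{\tau_b}^{t}\sigma^2(Z_s)\,ds=\langle Z\rangle_t-\langle Z\rangle_{\tau_b}=0$, which is $\sigma^2(b)(t-\tau_b)=0$, forcing $\sigma(b)=0$.
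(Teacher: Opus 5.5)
Your proof is correct and takes essentially the same route as the paper. The steps match one for one: boundedness and the tower property give the first claim, Brownian martingale representation plus a L\'evy sign-flip gives $dZ_t=|H_t|\,dW_t$, and the additive-functional representation of $\langle Z\rangle$ (which the paper also just cites from Rogers--Williams rather than proving) gives $|H_t|=\sigma(Z_t)$. Lemma~\ref{lemma:ZAbsorbingBoundaries} then gives $\sigma(b)=0$. Two of your remarks are small genuine improvements on details the paper leaves implicit: that $\cF_0$ must be trivial for $Z_0=z$, and that the convention $\sign(0)=1$ is needed for the rotated process to be a Brownian motion.
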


\begin{proof}
Since $B$ is integrable and $0\le B\le1$, $Z$ is an
$\FF$-martingale with $0\le Z_t\le1$ and $Z_0=\E[B]=z$.

Assume $Z$ is a continuous Markov process and $\FF$ is a Brownian
filtration. Then $Z$ is a continuous square-integrable martingale
adapted to $\FF$. By the martingale representation theorem, there
exist an $\FF$-Brownian motion $W'$ and an
$\FF$-predictable process $H$ with
\[
Z_t = Z_0 + \int_0^t H_s\,dW'_s\,,\qquad
\langle Z\rangle_t = \int_0^t H_s^2\,ds\,.
\]
By L\'evy's characterization, $W_t:=\int_0^t\sign(H_s)\,dW'_s$
is an $\FF$-Brownian motion and $dZ_t=|H_t|\,dW_t$.

For a one-dimensional continuous time-homogeneous Markov process,
the quadratic variation is a homogeneous additive functional of the
process. By the theory of additive functionals
\cite[Ch.~V]{RogersWilliams00}, there exists a Borel function
$a:[0,1]\to[0,\infty)$ such that
$\langle Z\rangle_t=\int_0^ta(Z_s)\,ds$, i.e.,
$|H_t|=\sqrt{a(Z_t)}$ for $dt\times d\Pr$-a.e.\ $(t,\omega)$.
Setting $\sigma(x):=\sqrt{a(x)}$ gives
$dZ_t=\sigma(Z_t)\,dW_t$.

The last claim follows from Lemma~\ref{lemma:ZAbsorbingBoundaries}.
\end{proof}

\begin{remark}
By the same argument as in Theorem~\ref{Th:SDEasCondExp}, the
Bernoulli variable $B$ in Theorem~\ref{Th:CondExpasSDE} coincides
a.s.\ with $Z_\infty$. 
Without time-homogeneity, the martingale representation need not reduce to a coefficient depending only on the current state.
\end{remark}

\subsection{The Bernoulli limit is truly asymptotic}
\label{sec:asymptotic_limit}

\begin{theorem}
\label{thm:positive_survival_hitting_time}
Consider an $[a,b]$-martingale $Z$ solving~\eqref{eq:dZ} with
$a<z<b$ and $\sigma$ bounded on $[a,b]$.\footnote{Boundedness of
$\sigma$ on $[a,b]$ is automatic under~(A1)--(A3) since $\sigma$ is
continuous on the compact set $[a,b]$.} Then
\[
\Pr(\tau>t)>0\qquad\forall\,t>0\,.
\]
\end{theorem}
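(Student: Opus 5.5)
The plan is to argue by contradiction, exploiting the boundedness of $\sigma$ to control the quadratic variation on a finite horizon. Fix $t>0$ and suppose $\Pr(\tau>t)=0$, i.e.\ $\tau\le t$ a.s. Then $Z_t=Z_{t\wedge\tau}\in\{a,b\}$ a.s., since by Lemma~\ref{lemma:ZAbsorbingBoundaries} (or Theorem~\ref{thm:absorbed_existence_uniqueness}(b)) the process is frozen after $\tau$. The martingale property gives $\E[Z_t]=z$, so $Z_t\sim\mathcal{B}(a,b,z)$. By Proposition~\ref{prop:variance}(i) its variance is exactly $(b-z)(z-a)>0$, because $a<z<b$. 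On the other hand, It\^o's isometry gives
\[
(b-z)(z-a)=\var[Z_t]=\E\Big[\int_0^{t\wedge\tau}\sigma^2(Z_s)\,ds\Big].
\]
The variance identity alone is not contradictory, since the right-hand side can be as large as $t\|\sigma\|_\infty^2$. A finer, pathwise argument is therefore needed.

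The refinement uses a time change. Write $Z_{s\wedge\tau}=z+\beta_{\langle Z\rangle_{s\wedge\tau}}$ by Dambis--Dubins--Schwarz, where $\beta$ is a Brownian motion (on a possibly enlarged space). Since $\sigma$ is bounded, say $\sigma\le M$, we have $\langle Z\rangle_{t\wedge\tau}\le M^2t=:c$. On $\{\tau\le t\}$, the path of $\beta$ must reach $a-z$ or $b-z$ before the clock time $\langle Z\rangle_\tau\le c$. Hence
\[
\{\tau\le t\}\subseteq\Big\{\inf_{u\le c}\beta_u\le a-z\Big\}\cup\Big\{\sup_{u\le c}\beta_u\ge b-z\Big\}.
\]
Conversely, on the event $E:=\{\sup_{u\le c}|\beta_u|<\min(z-a,b-z)\}$, the time-changed path cannot have reached the boundary by time $t$, so $\tau>t$. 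We have $\Pr(E)>0$ because Brownian motion stays in any open interval around $0$ up to any finite time with positive probability; this follows from the explicit exit-time law, or from the support theorem.

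So $\Pr(\tau>t)\ge\Pr(E)>0$. The argument is direct and does not need the contradiction set-up; the Bernoulli/variance observation above is only motivation.

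The main obstacle is the rigorous use of Dambis--Dubins--Schwarz when $\langle Z\rangle_\infty$ may be finite, which happens when the boundaries are attained. The standard fix is to extend $\beta$ beyond $\langle Z\rangle_\infty$ with an independent Brownian motion. The inclusion $E\subseteq\{\tau>t\}$ must be checked carefully. If $\tau\le t$, then $Z_\tau\in\{a,b\}$ means that $\beta$ at clock time $\langle Z\rangle_\tau\le c$ equals $a-z$ or $b-z$, which contradicts $E$. This holds regardless of any extension, because only clock times up to $\langle Z\rangle_\tau$ are involved, and there $\beta$ coincides with the time-changed $Z-z$.

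Boundedness of $\sigma$, which the footnote notes is automatic under (A1)--(A3), is exactly what makes the clock $c$ finite and deterministic. That is the crux of the argument, and it explains why the Bernoulli limit cannot be reached at a finite deterministic time even when the boundaries are exit or regular.
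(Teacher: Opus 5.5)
Your proposal is correct and is essentially the paper's proof: both stop $Z$ at $\tau$, apply Dambis--Dubins--Schwarz with the deterministic clock bound $\langle Z\rangle_{t\wedge\tau}\le M^2t$, and show that the event that the DDS Brownian motion stays in $(-d,d)$, $d=\min\{z-a,b-z\}$, up to clock time $M^2t$ has positive probability and is contained in $\{\tau>t\}$. The contradiction/variance preamble is unnecessary, as you yourself note. Your extension of the DDS Brownian motion when $\langle Z\rangle_\infty<\infty$ is a detail the paper glosses over, and your inclusion check handles it correctly.
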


\begin{proof}
Since $a<Z_0<b$, $\Pr(\tau>0)=1$. Since $\sigma$ is bounded, there
exists $M>0$ with $|\sigma(x)|\le M$ on $[a,b]$. The process
$N_t:=Z_{t\wedge\tau}-z$ is a continuous martingale with $N_0=0$
and bounded quadratic variation
\beq
\langle N\rangle_t
= \int_0^{t\wedge\tau}\sigma(Z_s)^2\,ds \le M^2 t\,.
\label{eq:QVNs}
\eeq
By the Dambis-Dubins-Schwarz theorem, there exists a Brownian
motion $B=(B_s)_{s\ge0}$ with $N_t=B_{\Theta_t}$, where
$\Theta_t=\langle N\rangle_t$. Let $d:=\min\{z-a,b-z\}>0$ and
\[
A_t := \Bigl\{\sup_{0\le s\le M^2 t}|B_s|<d\Bigr\}\,.
\]
From~\eqref{eq:QVNs}, $\Theta_s\in[0,M^2 t]$ for $s\le t$. On
$A_t$, $|B_{\Theta_s}|<d$ for all $s\le t$, so $|N_s|<d$ and
$Z_{s\wedge\tau}\in(a,b)$ for all $s\le t$. Therefore
$A_t\subseteq\{\tau>t\}$, giving $\Pr(\tau>t)\ge\Pr(A_t)>0$ since
$B$ has strictly positive probability of remaining in $(-d,d)$ over
any finite interval.
\end{proof}

\begin{corollary}
\label{cor:asymptotic}
For any $T<\infty$, $\Pr(Z_T\notin\{a,b\})>0$. Even when the
individual boundaries are accessible (as in the Jacobi case), the
event $\{\tau\le T\}$ has strictly positive probability for all
$T>0$ but never equals $1$ for any finite $T$. For $[0,1]$-martingales, the Bernoulli
distribution acts as a genuine asymptotic limit, not a finite-time
distribution.
\end{corollary}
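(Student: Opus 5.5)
The plan is to deduce everything from Theorem~\ref{thm:positive_survival_hitting_time}, working with the absorbed solution $Z=Z^\tau$ of Theorem~\ref{thm:absorbed_existence_uniqueness} and with $a<z<b$. (If $z\in\{a,b\}$, the process is constant by Theorem~\ref{thm:absorbed_existence_uniqueness}(c) and there is nothing asymptotic to discuss.)

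\textbf{Step 1: $\Pr(Z_T\notin\{a,b\})>0$ and $\Pr(\tau\le T)<1$.} On $\{\tau>T\}$ the continuous path has not yet touched $\{a,b\}$ on $[0,T]$. Since $Z_0\in(a,b)$ and $\tau$ is the first hitting time of $\{a,b\}$, this gives $Z_T\in(a,b)$. Hence
\[
\Pr(Z_T\notin\{a,b\})\ \ge\ \Pr(\tau>T)\ >\ 0
\]
by Theorem~\ref{thm:positive_survival_hitting_time}. Consequently $\Pr(\tau\le T)=1-\Pr(\tau>T)<1$ for every finite $T$. The law of $Z_T$ puts positive mass on $(a,b)$, so it is not $\mathcal{B}(a,b,z)$. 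By Proposition~\ref{prop:variance}(i) this means $v(T)<(b-z)(z-a)$. The Bernoulli law of Corollary~\ref{cor:Bernoulli} is therefore reached only as $t\to\infty$, which proves the last sentence of the statement.

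\textbf{Step 2: $\Pr(\tau\le T)>0$ for every $T>0$ when a boundary is accessible.} This clause has to be read under the hypothesis that some endpoint, say $a$, is attainable (exit or regular in Feller's sense, as in Table~\ref{tab:examples-corrected}). For a natural pair, as in Case~2, $\Pr(\tau\le T)=0$. I would argue in two stages, using the strong Markov property of the absorbed diffusion.

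\emph{Stage 2a: reaching a neighbourhood of $a$ quickly.} Fix $\varepsilon>0$ small and $y=a+\varepsilon$. On the compact interval $[a+\varepsilon/2,\,b-\varepsilon/2]$, $\sigma$ is bounded below by some $m>0$ and above by $M$. By the Dambis--Dubins--Schwarz representation used in the proof of Theorem~\ref{thm:positive_survival_hitting_time}, the time change $\Theta$ runs at rate in $[m^2,M^2]$ while the path stays in that interval. The Brownian motion therefore reaches level $y-z$ before time $m^2T/2$ with positive probability, while staying above $a+\varepsilon/2-z$. This gives $\Pr(\tau_y\le T/2)>0$.

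\emph{Stage 2b: exiting from $y$ quickly.} Attainability of $a$ in Feller's classification (Appendix~\ref{App:boundaries}) is equivalent to finiteness of the Green-function integral
\[
\E_y[\tau]=\int_a^b G(y,x)\,\frac{2}{\sigma^2(x)}\,dx ,
\]
and this integral tends to $0$ as $y\downarrow a$ by dominated convergence. Choose $\varepsilon$ so that $\E_y[\tau]<T/4$. Markov's inequality then gives $\Pr_y(\tau\le T/2)\ge 1/2$.

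\emph{Combining.} The strong Markov property at $\tau_y$ yields
\[
\Pr(\tau\le T)\ \ge\ \Pr(\tau_y\le T/2)\cdot\tfrac12\ >\ 0 .
\]

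\textbf{Main obstacle.} The delicate step is 2b: justifying that $\E_y[\tau]\to0$ as $y$ approaches an attainable endpoint. This is where the Feller integrability condition on $\sigma^{-2}$ near $a$ enters. If $b$ is natural, $\E_y[\tau]$ may be infinite, so I would instead use the one-sided exit time $\tau_a\wedge\tau_{c}$ for an interior point $c$. If that causes trouble, a fallback is the scaling/series argument via the Laplace transform of $\tau_a$, whose finiteness near $a$ is equivalent to accessibility.
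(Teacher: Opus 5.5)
Your Step~1 is the intended derivation. The paper gives no separate proof of this corollary and reads $\Pr(Z_T\notin\{a,b\})\ge\Pr(\tau>T)>0$ and $\Pr(\tau\le T)<1$ directly off Theorem~\ref{thm:positive_survival_hitting_time}, with $a<z<b$ tacitly inherited from that theorem, as you note. Using Proposition~\ref{prop:variance}(i) to conclude $v(T)<(b-z)(z-a)$ is a clean way to make the last sentence precise.

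Step~2 goes beyond the paper, which asserts $\Pr(\tau\le T)>0$ for accessible boundaries without any argument. You are right that this clause must be read conditionally. Your two-stage strategy works, but Stage~2b as written fails in exactly the mixed case the statement is aimed at: Case~4 of Table~\ref{tab:examples-corrected}, $\sigma(x)=x^{1/4}(1-x)$. There $\E_y[\tau_a\wedge\tau_b]$ contains $\int^b(b-x)\sigma^{-2}(x)\,dx$, which diverges when $b$ is inaccessible, so attainability of $a$ alone is not equivalent to finiteness of that integral.

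Your fallback $\tau_a\wedge\tau_c$ restores finiteness, and dominated convergence goes through with the bound $G_{a,c}(y,x)\le (x-a)(c-x)/(c-a)$. But Markov's inequality then controls $\tau_a\wedge\tau_c$ rather than $\tau$, since the exit may occur at the interior point $c$. The missing line is optional stopping in natural scale, $\Pr_y(\tau_c<\tau_a)=(y-a)/(c-a)$. This gives
\[
\Pr_y(\tau\le T/2)\;\ge\;\Pr_y(\tau_a\le T/2)\;\ge\;\Pr_y(\tau_a\wedge\tau_c\le T/2)-\frac{y-a}{c-a}\;>\;\frac12-\frac{y-a}{c-a}\;>\;0
\]
once $y$ is close enough to $a$.

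There are two smaller points. First, in Stage~2a the Brownian path must stay in the two-sided tube $(a+\varepsilon/2-z,\,b-\varepsilon/2-z)$, not just above its lower edge. Otherwise the rate bound $m^2$ on $\Theta$ is lost, and $Z$ may even be absorbed at $b$ before reaching $y$. Second, derive the accessibility criterion directly from the Green function, namely $\int_a^c(x-a)\sigma^{-2}(x)\,dx<\infty$, rather than citing Appendix~\ref{App:boundaries}. That table attaches attainability to $N_a=\int_a^z(z-x)\sigma^{-2}(x)\,dx$ instead, so its labels are swapped: for the Jacobi martingale $N_0=\infty$ even though $0$ is hit.
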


% ===================================================================
\section{Some Tractable Examples}
\label{sec:examples}
% ===================================================================

We illustrate the theory with four tractable
examples for $(a,b)=(0,1)$. The first two are homogeneous diffusion
cases, chosen to represent the two qualitatively different boundary
regimes: natural (inaccessible) and exit (accessible). The last two
highlight the importance of the time-homogeneity assumption in
Theorems~\ref{Th:SDEasCondExp}
and~\ref{thm:positive_survival_hitting_time}.

\subsection{The $\Phi$-martingale}
\label{sec:phi}
%$$
%Z_t=Z_0\;\exp\left\{-\frac{\sigma^2}{2} t+\sigma W_t\right\}
%$$
%$$
%Z_t:=\Phi\left(e^{\frac{\eta^2}{2}t}(\Phi^{-1}(Z_0)+\eta\int_0^te^{-\frac{\eta^2}{2}s}\,dW_s)\right)
%$$

The $\Phi$-martingale \cite{Vrins16} with constant volatility $\eta$
corresponds to the process
\[
Z_t := \Phi(X_t)\,,\quad
X_t := e^{\frac{\eta^2}{2}t}(k+I_t)\,,\quad
I_t := \eta\int_0^te^{-\frac{\eta^2}{2}s}\,dW_s\,,\quad
k = \Phi^{-1}(z)\,,
\]
where $\Phi$ is the standard normal CDF and $\phi=\Phi'$. As studied independently by Carr~\cite{Carr17}, this process can be thought of as the natural analogue of Brownian motion
on the compact set $[0,1]$. It is easy to check from It\^o's
lemma that $Z$ solves the time-homogeneous SDE \eqref{eq:dZ} with%
\footnote{The function $\phi(\Phi^{-1}(x))$ is locally
H\"older-$1/2$, provable via Mills' formula; see
e.g.~\cite{Dumbgen10}. Hence $Z$ is a particular case of
Theorem~\ref{thm:absorbed_existence_uniqueness}.}
\[
%dZ_t = \sigma(Z_t)\,dW_t\quad\text{where}\quad
\sigma(x) = \eta\phi(\Phi^{-1}(x))\,,\quad z\in[0,1]\,.
\]

Both boundaries are natural (inaccessible). The variance is
\[
v(t) = \Phi_2\!\left(\Phi^{-1}(z),\Phi^{-1}(z);\rho(t)\right)-z^2\,,
\qquad\rho(t)=1-e^{-\eta^2 t}\,,
\]
where $\Phi_2$ is the bivariate standard normal CDF. As
$t\to\infty$, $\rho(t)\to1$ and $v(t)\to z(1-z)$, confirming
Bernoulli convergence. Since $Z$ is the image of a Gaussian process
under $\Phi$, the boundaries are never reached in finite time,
i.e., $\Pr(\tau>t)=1$ for all $t\ge0$ when
$z\in(0,1)$, consistent with the natural boundary classification.

The $\Phi$-martingale admits the Bernoulli-Doob representation with
$B:=Z_\infty=\ind_{\{k+I_\infty>0\}}\sim\mathrm{Ber}(z)$,
confirming Theorem~\ref{Th:SDEasCondExp}. To see this, set
$\mathcal{Z}\sim\cN(0,1)$ independently of $\cF_t$. Then,
\begin{align*}
\E[\ind_{\{k+I_\infty>0\}}\mid\cF_t]
&= \Pr\!\left(\eta^2\sqrt{\int_t^\infty e^{-\eta^2 s}\,ds}
   \cdot \mathcal{Z} > -(k+I_t)\,\Bigg|\,\cF_t\right)
 = \Phi\!\left(\frac{k+I_t}{\sqrt{e^{-\eta^2 t}}}\right)
 = \Phi(X_t) = Z_t\,.
\end{align*}

\subsection{The Jacobi martingale}
\label{sec:jacobi}

The Jacobi martingale corresponds to~\eqref{eq:dZ} with
$\sigma(x)=\sqrt{\eta x(1-x)}$, which is H\"older-$1/2$ on $[0,1]$. This process is also known as the (driftless version of the) Wright-Fisher process~\cite{JenkinsSpano17}. This is related to the process we encountered earlier but in the special case $\eta=1$. Both boundaries are of exit type (attainable, absorbing). The
second moment $g(t):=\E[Z_t^2]$ satisfies the ODE
$\dot{g}=\eta z-\eta g$ with $g(0)=z^2$, giving
\[
v(t) = z(1-z)(1-e^{-\eta t})\,,
\]
which converges to $z(1-z)$ as $t\to\infty$, confirming Bernoulli
convergence.

The expected first hitting time to either boundary, starting from
$Z_0=x$, satisfies $\mathcal{L}m=-1$ with $m(0)=m(1)=0$, where
$\mathcal{L}f(x)=\frac{\eta}{2}x(1-x)f''(x)$. This gives
\[
\E_x[\tau] = -\frac{2}{\eta}
\bigl(x\ln x+(1-x)\ln(1-x)\bigr)\,,
\]
which is finite for all $x\in(0,1)$, so $\Pr_x(\tau<\infty)=1$.
The conditional expected hitting times are
\[
\E_x[\tau_0]
= \E_x[\tau\mid Z_\tau=0]
= -\frac{2}{\eta}\frac{x\ln x}{1-x}\,,
\]
and $\E_x[\tau_1]$ is obtained by replacing $x$ by $1-x$.

By Theorem~\ref{thm:positive_survival_hitting_time}, despite
$\Pr_x(\tau<\infty)=1$, we have $\Pr_x(\tau>T)>0$ for all
$T<\infty$. The Bernoulli limit is truly asymptotic.

\begin{remark}\label{Rem:BJacobi} Because $Z$ is a martingale, one gets $Z_t=\E[Z_\infty\mid\cF_t]$ where $Z_\infty:=\lim_{t\to\infty} Z_t$, $Z_t:=z+\int_0^t\sigma(Z_s)dW_s$. One can check that in the specific Jacobi case with $\eta=1$, i.e., when $\sigma(z)=\sqrt{z(1-z)}$, $Z_\infty(\omega)=\ind_{\{\tau_1(\omega)<\tau_0(\omega)\}}$ a.s.. This provides the explicit expression of the binary random variable $B$ in our Bernoulli-Doob representation result, Theorem~\ref{Th:SDEasCondExp}. See also the case in Section~\ref{sec:BZinfty} below. 
\end{remark}

\subsection{A time-inhomogeneous case}
\label{sec:inhomogeneous}

It is easy to check using It\^o's lemma that the
process $Z=(Z_t)_{t\ge0}$ defined as
$Z_t=\Phi\!\left(X_t/\sqrt{1+e^{\eta^2 t}}\right)$, where $X$ is
as in Section~\ref{sec:phi} with $k=\Phi^{-1}(z)\sqrt{2}$, %\footnote{\textcolor{red}{\textbf{Note: the correct expression is $k=\Phi^{-1}(z)\sqrt{2}$, not $k=\Phi(z)\sqrt{2}$. The argument of $\Phi^{-1}$ is the initial value $z\in(0,1)$, and $\Phi^{-1}$ maps $(0,1)$ to $\mathbb{R}$. Using $\Phi(z)$ instead would give a value in $(0,1)$ and would not produce the correct initial condition $Z_0=z$.}}}
solves the SDE
\[
dZ_t = \sigma(t,Z_t)\,dW_t\quad\text{where}\quad
\sigma(t,x) := \frac{\phi(\Phi^{-1}(x))}{\sqrt{1+e^{\eta^2 t}}}\,,
\quad Z_0=z\in[0,1]\,.
\]
Interestingly, $Z$ can also be written as the conditional
expectation of a random variable $Z_\infty$ bounded in $[0,1]$.
Indeed, setting $\mathcal{Z}\sim\cN(0,1)$ independently of $\cF_t$:
\begin{align*}
\E[\Phi(k+I_\infty)\mid\cF_t]
&= \E\!\left[\Phi\!\left(k+I_t+\eta^2\sqrt{\int_t^\infty
   e^{-\eta^2 s}\,ds}\cdot\mathcal{Z}\right)\right]
 = \Phi\!\left(\frac{k+I_t}{\sqrt{1+e^{-\eta^2 t}}}\right)
 = Z_t\,.
\end{align*}
This shows that $Z$ is a true martingale in $[0,1]$ and can be
written as the conditional expectation of
$Z_\infty=\Phi(k+I_\infty)$, which is bounded in $[0,1]$. However,
in contrast with the $\Phi$-martingale, $Z_\infty$ is \emph{not}
Bernoulli distributed: $Z_\infty=\Phi(k+I_\infty)\sim\Phi(k+\mathcal{Z})$
with $\mathcal{Z}\sim\cN(0,1)$, which is a continuous distribution
on $(0,1)$. This does not contradict Theorem~\ref{Th:SDEasCondExp}
because the diffusion coefficient is not time-homogeneous: compared
to the $\Phi$-martingale, the denominator $\sqrt{1+e^{\eta^2 t}}$
in $\sigma(t,x)$ grows without bound, reducing the volatility of
the process along each trajectory and preventing it from diverging
to the bounds. This explains why the asymptotic behaviour is no
longer Bernoulli.

\subsection{A time-changed $[0,1]$-martingale}
\label{sec:timechange}

Theorem~\ref{thm:positive_survival_hitting_time}
shows that no homogeneous diffusion martingale can hit the bounds
almost surely before a fixed finite time $T$. However,
time-inhomogeneous diffusions can achieve $\Pr(\tau\le T)=1$. One
way to reproduce this behaviour is to use an exploding time-change
of a homogeneous martingale, as we now show.\medskip

Given a homogeneous $[0,1]$-martingale $Z$ solving \eqref{eq:dZ}, define the time-changed process
$Y_u:=Z_{t(u)}$ where
\[
u(t) = T(1-e^{-\lambda t})
\quad\Leftrightarrow\quad
t(u) = -\frac{1}{\lambda}\log\!\left(1-\frac{u}{T}\right)
\]
for some $\lambda>0$. As $t$ ranges $[0,\infty)$, $u$ ranges
$[0,T)$. The dynamics of $Y$ are
\[
dY_u = \sqrt{(\lambda(T-u))^{-1}}\,\sigma(Y_u)\,d\tilde{W}_u\,,
\qquad u\in[0,T)\,,
\]
where $\tilde{W}$ is a standard Brownian motion under the
time-changed filtration. Since $Y_T=Z_\infty\in\{a,b\}$ a.s., the
process hits one of the bounds before $T$ almost surely.
This does not contradict
Theorem~\ref{thm:positive_survival_hitting_time} because the
diffusion coefficient is time-inhomogeneous. For $u>T$, one may
extend by replacing $u/T$ with $(u\wedge T)/T$, after which $Y$
remains constant at the attained bound.

In the special case where the latent process $Z$ is
the $\Phi$-martingale discussed in Section \ref{sec:phi}, the diffusion coefficient of $Y$ is similar
to that of the process $Z$ in Section~\ref{sec:inhomogeneous}: the
only difference is the denominator, which changes from
$\sqrt{1+e^{\eta^2 t}}$ to $\sqrt{\lambda(T-t)}$. In contrast with
the former, the latter vanishes as $t\to T$, causing $\sigma$ to
explode at $t=T$, and explains why $Y_T\in\{0,1\}$ almost surely.

%Figure~\ref{fig:plots} shows sample paths of $Y$ for several
%choices of $\sigma$ and $\lambda$.\textcolor{red}{[I suggest we move the figures in a supplementary material annex.]}

\ifdefined\undefined{
\begin{figure}[h]
\centering
\subfigure[$\lambda=1$, $\sigma^2(x)=x^2(1-x)^2$]{%
  \includegraphics[width=0.45\columnwidth]
  {Fig/PlotSigmaX1mX-Lambda1.pdf}}\hspace{0.2cm}
\subfigure[$\lambda=2$, $\sigma^2(x)=x^2(1-x)^2$]{%
  \includegraphics[width=0.45\columnwidth]
  {Fig/PlotSigmaX1mX-Lambda2.pdf}}\\
\subfigure[$\lambda=1$, $\sigma^2(x)=\sqrt{x(1-x)}$]{%
  \includegraphics[width=0.45\columnwidth]
  {Fig/PlotSigmaSqrt0d25X1mX-Lambda1.pdf}}\hspace{0.2cm}
\subfigure[$\lambda=2$, $\sigma^2(x)=\sqrt{x(1-x)}$]{%
  \includegraphics[width=0.45\columnwidth]
  {Fig/PlotSigmaSqrt0d25X1mX-Lambda2.pdf}}\\
\subfigure[$\lambda=1$, $\sigma^2(x)=\sqrt{x}(1-x)^2$]{%
  \includegraphics[width=0.45\columnwidth]
  {Fig/PlotSigmaSqrt0d25XTimesX1mX-Lambda1.pdf}}\hspace{0.2cm}
\subfigure[$\lambda=2$, $\sigma^2(x)=\sqrt{x}(1-x)^2$]{%
  \includegraphics[width=0.45\columnwidth]
  {Fig/PlotSigmaSqrt0d25XTimesX1mX-Lambda2.pdf}}
\caption{Ten sample paths of $Y$ with parameters
$\Delta t=10^{-4}$, $T=2$, $Y_0=1/2$. The dotted lines mark the
boundaries $\{0,1\}$. All paths converge to a boundary by time $T$,
consistent with $\Pr(\tau^Y\le T)=1$.}
\label{fig:plots}
\end{figure}
}\fi

% ===================================================================
\section{Applications to Credit Risk and Financial Modelling}
\label{sec:applications}
% ===================================================================

%\textcolor{red}{[Many changes in this section. The subsections now have the same type of structure and I provided extra results that help understand the differences and why there is no contradiction with our theory. Please check. In addition, I have changed the default time from $\tau$ to $\xi$, to avoid the notation clash with $\tau:=\tau_0\wedge \tau_1$. The parts in red are the changes which require more attention.]}.\medskip

\subsection{Credit Risk Modelling} 

The Bernoulli-Doob representation arises naturally in credit
risk modelling, where $Z_t=\E[B\mid \cF_t]$ coincides with $S^\FF_t(T)$ when $B=\ind_{\{\xi>T\}}$. For conciseness, we restrict ourselves to provide three examples associated with the main families of default models.\medskip

The default time $\xi$ of a reference
entity is a stopping time under the enlarged filtration
$\mathbb{G}=(\mathcal{G}_t)_{t\ge0}$, where
$\mathcal{G}_t=\cF_t\vee\sigma(\{\{\xi<u\},u\le t\})$. In this case, the survival probability process $S^\mathbb{G}(T)$ defined as $S^\mathbb{G}_t(T):=\Pr(\xi>T\mid\mathcal{G}_t)$ is a $\mathbb{G}$-martingale in $[0,1]$ converging to $0$ or $1$ as $t\to T$, and hitting $0$ before $T$ whenever $\xi<T$. This may not be the case for $S^\FF(T)$, unless $\xi$ is an $\FF$-stopping time. The following default models show how various information
structures lead to different boundary types for the $\FF$-martingale
$Z=S^\FF(T)$, which is known to play a key role in default models~\cite{Biel02}. All these results are consistent with the Bernoulli-Doob framework exposed above. Table~\ref{tab:credit} summarises the three cases. We note $z=Z_0=S_0^\FF(T)$.

\medskip\noindent\textbf{Structural model.}
In standard structural models, the Bernoulli variable takes the form $B=\ind_{\{V_T>K\}}$ where the firm-value process $V$ and the debt level $K$ are $\FF$-adapted, such that $\xi$ is an $\FF$-stopping time. Therefore, $\{\xi>T\}\in\cF_T$, hence,
$Z_t=S^\FF_t(T)=S_t^\mathbb{G}(T)$ for all $t\geq 0$, $Z_t\in\{0,1\}$ for all $t\ge T$, and
$\Pr(\tau\le T)=1$. In this case, the Bernoulli law is not only reached asymptotically as $t\to\infty$, but $Z_t\sim Ber(z)$ exactly, for all $t\geq T$.\medskip

In the original Merton's model \cite{Merton74}, where $K$ is constant and $V$ is an $\FF$-GBM with parameters $(\mu,\sigma)$,
\[
Z_t = \Phi(d^+_t)\quad\text{where}\quad
d^+_t := \frac{(\mu-\frac{\sigma^2}{2})(T-t)+\ln(V_t/K)}
             {\sigma\sqrt{T-t}}\,.
\]

From It\^o's lemma, the dynamics of $Z$ read as
\beq
dZ_t=\ind_{\{t<T\}}\frac{\phi(\Phi^{-1}(Z_t))}{\sqrt{T-t}}\,dW_t,\label{eq:dZ-Struct}
\eeq
i.e., the diffusion coefficient is time-inhomogeneous on $[0,T)$.  
This makes clear why $Z$ reaches the bounds almost surely by time $T$ without contradicting Theorem~~\ref{thm:positive_survival_hitting_time}: the diffusion coefficient is explicitly time-inhomogeneous and becomes singular as $t\uparrow T$. Likewise, there is no contradiction with Theorem~\ref{Th:CondExpasSDE}. Although $Z$ is a Bernoulli-Doob martingale, its quadratic variation density is
\[
\frac{d\langle Z\rangle_t}{dt}
=
\ind_{\{t<T\}}\frac{\phi(\Phi^{-1}(Z_t))^2}{T-t},
\]
which depends explicitly on time and not only on the current state $Z_t$. Hence $Z$ is not a one-dimensional time-homogeneous diffusion of the form $dZ_t = \sigma(Z_t) dW_t$.
Interestingly, the diffusion coefficient of $Z$ in the structural model coincides with that of the time-changed process $Y$ considered in Section~\ref{sec:timechange} when $\lambda=1$ and the latent
process $Z$ therein follows the $\Phi$-martingale dynamics discussed in Section \ref{sec:phi}.\medskip

\medskip\noindent\textbf{Cox model.}
In Cox models, the Bernoulli variable reads as $B=\ind_{\{\int_0^T\lambda_s\,ds<\mathcal{E}\}}$, where $\lambda$ is $\FF$-adapted and
$\mathcal{E}\perp\cF_\infty$, such that $\xi$ is not an $\FF$-stopping time: $\{\xi>T\}\notin\cF_\infty$. Therefore, $Z=S^\FF(T)$ never reaches the bounds
(natural boundaries). In such models, $Z$ is not Markovian (it depends on both
$\lambda_t$ and $\Lambda_t:=\int_0^t\lambda_s\,ds$), so
Theorem~\ref{Th:CondExpasSDE} does not apply and explains why the Bernoulli-Doob martingale $Z$ may not take the form of a time-inhomogeneous diffusion.\medskip

Observe that, as in Merton's model, the limit law is not purely asymptotic, but is reached a.s. at the finite time $T$ because $Z_t=\Pr(\mathcal{E}\ge \Lambda_T|\Lambda_T)$ for all $t\ge T$. However, in contrast with Merton, $Z_t$ is not Bernoulli distributed after $T$ because $\xi$ is not an $\FF$-stopping time. For instance, if $\lambda$ is the CIR %with volatility $\eta$ driven by a Brownian motion $W$, as 
considered in Remark~\ref{rem:CIR} and $\mathcal{E}\sim Exp(1)$, it holds that $Z_t=\exp\{-\Lambda_T\}\not\sim Ber(z)$ for all $t\geq T$. This does not contradict Corollary~\ref{cor:Bernoulli} because the diffusion coefficient of $Z$ does not meet Assumption \ref{ass:DC} on $[0,T)$. Indeed, it is not time-homogeneous:
\beq
dZ_t=-\ind_{\{t<T\}}\eta B(T-t)Z_t\sqrt{\lambda_t}dW_t,\label{eq:dZ-Cox}
\eeq
where $s\to B(s)$ is the usual function associated with the log-affine form 
\[
\E[\exp\{-\Lambda_T\}|\cF_t]= A(T-t^T)\exp\{-\Lambda_{t^T}-B(T-t^T)\lambda_T\}\quad\text{where}\quad t^T:=t\wedge T.
\]
In fact, $\sigma$ in \eqref{eq:dZ-Cox} is not even of the form $\sigma(t,Z_t)$.

\medskip\noindent\textbf{$\Phi$-martingale model.} In such models, the Bernoulli variable of interest is $B=\ind_{\{\Phi^{-1}(z)+I_\infty>0\}}$ where the Brownian motion $W$ governing the stochastic integral $I$ is $\FF$-adapted. We refer to~\cite{Vrins26} for a discussion about how such models can be applied to price collateralized debt obligation (CDO) options and credit valuation adjustment (CVA) under wrong-way risk.

Therefore, $\{\xi>T\}\in\cF_\infty$ but $\{\xi>T\}\notin\cF_t$ for all
$t<\infty$. The boundaries of $Z=S^\FF(T)$ are natural and $Z$ is time-homogeneous. All assumptions of
Theorem~\ref{Th:CondExpasSDE} are met, giving the $\Phi$-martingale
of Section~\ref{sec:phi}.\medskip

Both the Cox and the $\Phi$-martingale models belong to the
reduced-form family, but they differ in that the Az\'ema
supermartingale $S^\FF_t(t)$ is non-increasing in the former,
but features a martingale part in the latter. This specificity explains why the $\Phi$-martingale model behaves very differently from Cox. To see this, recall that the $\Phi$-martingale model yields
\beq
dZ_t=\eta\phi(\Phi^{-1}(Z_t))dW_t,\label{eq:dZ-Phi}
\eeq
i.e., the corresponding diffusion coefficient is \emph{not} killed as of $t=T$, which is a major difference with the above two families of models: the indicator $\ind_{\{t<T\}}$ appears in \eqref{eq:dZ-Struct} and \eqref{eq:dZ-Cox} but not in \eqref{eq:dZ-Phi}. Economically speaking, this means that the agent cannot observe $\xi$ in $\FF$, as in Cox. However, in contrast with Cox, the likelihood of the event $\{\xi>T\}$ is continuously updated after $T$ according to the information collected in $\FF$. In particular, the so-called \textit{immersion property} (also known as \textit{$H$ hypothesis}) holds in Cox but not in the $\Phi$-martingale case; see
\cite{Vrins16} and \cite{Vrins26} for details.

\begin{table}[h!]
\centering
\renewcommand{\arraystretch}{1.3}
\begin{tabular}{lcccc}
\toprule
Model & $B\in\cF_\infty$? & Boundaries & $Z$ Markov? &
Applicable result \\
\midrule
Structural
  & Yes
  & Exit
  & No
  & \\%Section~\ref{sec:timechange} \\
(Merton)
  & ($B\in\cF_T$)
  & (accessible)
  & (inhomogeneous)
  & Section~\ref{sec:timechange} \\
  \hline
Cox
  & No
  & Natural
  & No
  &\\% --- \\
  & ($B\notin\cF_t$, $\forall t\le \infty$)
  & (inaccessible)
  & No
  & --- \\
  \hline
$\Phi$-martingale
  & Yes
  & Natural
  & Yes
  & \\%Thm.~\ref{Th:CondExpasSDE} \\
  
  & ($B\notin\cF_t$, $\forall t<\infty$)
  & (inaccessible)
  & Yes
  & Theorem~\ref{Th:CondExpasSDE} \\
\bottomrule
\end{tabular}
\caption{Summary of the three default time models. The column
``$B\in\cF_\infty$?'' indicates whether the Bernoulli variable
$B=\ind_{\{\xi>T\}}$ is measurable with respect to the market
filtration at infinity. Theorem~\ref{Th:CondExpasSDE} applies when
$Z$ is a continuous time-homogeneous Markov process on a Brownian
filtration.}
\label{tab:credit}
\end{table}

\subsection{FX Modelling}\label{sec:Ingersoll} 
Financial applications outside the scope of credit and interest rate risks exist as well. For instance, Ingersoll argues that monetary policies usually aim to keep the currency values within a certain range, proposing to model the maturity-$T$ futures foreign exchange rate $Z$ as a martingale living in $[a,b]$ under the appropriate measure~\cite{Ingersoll97}. The relationship with our work, more precisely, is that it is shown in~\cite[Prop. 2]{Ingersoll97} that the risk-neutral futures exchange rate $Z$ cannot reach the bounds if the diffusion coefficient takes the form $(Z_{t}-a)(b-Z_{t})\sigma(t,T)$ where $\sigma(t,T)$ is uniformly bounded. As noted earlier, this is essentially the second case of boundary classification we examined. The author also stresses in Figure~1 of that article that the density of $Z$ becomes bimodal after some time when $g(t,T)=1$ and indicates that ``the probability that the exchange rate will be found close to one of the two barriers tends to accumulate'', but does not discuss the limit behavior any further nor does he consider alternative martingales. In this sense, that work, from a mathematical point of view, is a qualitative and numerical precursor for one case of our exact results.

% ===================================================================

\section{Explicit $B$ in the Jacobi case}
\label{sec:BZinfty}

In the next lemma, we provide the explicit indicator form of the variable $B$ in the representation $Z_t=\E[B\mid\cF_t]$ where $Z$ is the Jacobi martingale, as explained in Remark~\ref{Rem:BJacobi}.

\begin{lemma}[Jacobi case: explicit Bernoulli variable]
\label{lem:JacobiConstructiveB}
Let $Z=(Z_t)_{t\ge0}$ be the Jacobi martingale solving
\[
dZ_t=\sqrt{\eta Z_t(1-Z_t)}\,dW_t,\qquad Z_0=z\in[0,1],
\]
and define
\[
\tau_0:=\inf\{t\ge0: Z_t=0\},\qquad
\tau_1:=\inf\{t\ge0: Z_t=1\},\qquad
\tau:=\tau_0\wedge\tau_1.
\]
Set
\begin{equation}
B:=\ind_{\{\tau_1<\tau_0\}}.\label{eq:Bjacobi}
\end{equation}
Then:
\begin{enumerate}
\item[(i)] $B\in\{0,1\}$ almost surely;
\item[(ii)] $B=Z_\infty$ almost surely, where $Z_\infty:=\lim_{t\to\infty}Z_t$;
\item[(iii)] for every $t\ge0$,
\[
Z_t=\E[B\mid\cF_t];
\]
\item[(iv)] in particular,
\[
\Pr_z(\tau_1<\tau_0)=z,
\]
so that $B\sim\mathrm{Ber}(z)$.
\end{enumerate}
\end{lemma}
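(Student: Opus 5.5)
The plan is to show first that $\tau<\infty$ almost surely, then identify $Z_\infty$ with $Z_\tau$, and finally read off (iii) and (iv) from results already established. The boundary cases $z\in\{0,1\}$ are trivial. By Theorem~\ref{thm:absorbed_existence_uniqueness}(c), $Z\equiv z$. Then $\tau=\tau_z=0$ and the other hitting time is infinite, so $B=z=Z_\infty$, and all four claims hold at once. Henceforth I take $z\in(0,1)$.

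The first step is finiteness of $\tau$. In Section~\ref{sec:jacobi} the function $m(x)=-\frac{2}{\eta}(x\ln x+(1-x)\ln(1-x))$ solves $\mathcal{L}m=-1$ with $m(0)=m(1)=0$. To avoid relying on the formula for $\E_x[\tau]$ without justification, I apply It\^o's formula to $m(Z_{t\wedge\tau_\varepsilon})$, where $\tau_\varepsilon$ is the exit time from $[\varepsilon,1-\varepsilon]$. On that interval $m$ is smooth and the stochastic integral is a true martingale, so
\[
\E[t\wedge\tau_\varepsilon]=m(z)-\E[m(Z_{t\wedge\tau_\varepsilon})]\le m(z).
\]
Letting $\varepsilon\downarrow0$ and $t\to\infty$, monotone convergence gives $\E[\tau]\le m(z)<\infty$, hence $\tau<\infty$ almost surely. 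An alternative route is Corollary~\ref{cor:Bernoulli}, which gives $Z_\infty\in\{0,1\}$; but that alone does not exclude an asymptotic approach without hitting, so the Lyapunov-function bound is the cleaner argument.

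The second step identifies the limit. By Lemma~\ref{lemma:ZAbsorbingBoundaries} (or by absorption in Theorem~\ref{thm:absorbed_existence_uniqueness}(d)), $Z_t=Z_\tau$ for all $t\ge\tau$. Hence $Z_\infty=Z_\tau\in\{0,1\}$ almost surely, which already gives (i). On $\{\tau_1<\tau_0\}$ we have $\tau=\tau_1$ and $Z_\tau=1$. On the complement, which up to a null set is $\{\tau_0<\tau_1\}$, we have $Z_\tau=0$; the two hitting times cannot coincide while $\tau$ is finite, since $Z_\tau$ is single-valued. Therefore $Z_\infty=\ind_{\{\tau_1<\tau_0\}}=B$, which is (ii). Claim (iii) then follows exactly as in the proof of Theorem~\ref{Th:SDEasCondExp}: $Z$ is a bounded, hence uniformly integrable, martingale, so $Z_t=\E[Z_\infty\mid\cF_t]=\E[B\mid\cF_t]$. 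Taking $t=0$ gives $\Pr_z(\tau_1<\tau_0)=\E[B]=z$, which is (iv).

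The main obstacle is the first step: rigorously excluding $\tau=\infty$, and with it the ambiguity in the definition of $B$ on $\{\tau_0=\tau_1=\infty\}$. The localisation argument with $m$ handles this. The only care needed is that $m$ is continuous on $[0,1]$ and bounded by $\frac{2}{\eta}\ln 2$, which justifies dropping the nonnegative term $\E[m(Z_{t\wedge\tau_\varepsilon})]$ in the inequality above.
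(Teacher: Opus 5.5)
Your proof is correct and follows the paper's argument. The paper likewise shows that $\tau<\infty$ almost surely, uses absorption to get $Z_\infty=\ind_{\{\tau_1<\tau_0\}}$, obtains $Z_t=\E[Z_\infty\mid\cF_t]$ from uniform integrability, and evaluates at $t=0$ for (iv). The one difference is that you prove $\Pr_z(\tau<\infty)=1$ by applying It\^o's formula to $m(Z_{t\wedge\tau_\varepsilon})$, whereas the paper only asserts it, implicitly relying on the formula $\E_x[\tau]=m(x)$ from Section~\ref{sec:jacobi}. You are also right that Corollary~\ref{cor:Bernoulli} alone would not exclude $\tau=\infty$.
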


\begin{proof}
The cases $z=0$ and $z=1$ are trivial, so assume $z\in(0,1)$. In the Jacobi case, the process $Z$ is a bounded continuous martingale on $[0,1]$, both boundaries are absorbing once hit, and
\[
\Pr_z(\tau<\infty)=1.
\]
Since the paths are continuous and the boundary points are distinct, one cannot have $\tau_0=\tau_1<\infty$. Hence the events $\{\tau_1<\tau_0\}$ and $\{\tau_0<\tau_1\}$ are disjoint and form an almost sure partition of the sample space. Therefore
\[
B=\ind_{\{\tau_1<\tau_0\}}\in\{0,1\}
\qquad\text{a.s.,}
\]
which proves (i).

Because $Z$ is a bounded martingale, it is uniformly integrable, and the martingale convergence theorem yields an integrable random variable $Z_\infty$ such that
\[
Z_t\to Z_\infty
\qquad\text{a.s. and in }L^1.
\]
On the event $\{\tau_1<\tau_0\}$, absorption at $1$ gives
\[
Z_t=1,\qquad t\ge\tau_1,
\]
hence $Z_\infty=1$ there. Similarly, on $\{\tau_0<\tau_1\}$ one has
\[
Z_t=0,\qquad t\ge\tau_0,
\]
so $Z_\infty=0$ there. Since these two events form an almost sure partition, it follows that
\[
Z_\infty=\ind_{\{\tau_1<\tau_0\}}=B
\qquad\text{a.s.,}
\]
which proves (ii).

Since $Z$ is uniformly integrable and converges in $L^1$ to $Z_\infty$, one has
\[
Z_t=\E[Z_\infty\mid\cF_t],\qquad t\ge0.
\]
Using (ii), this becomes
\[
Z_t=\E[B\mid\cF_t],\qquad t\ge0,
\]
which proves (iii).

Finally, evaluating at $t=0$ gives
\[
z=Z_0=\E[B]=\Pr(B=1)=\Pr_z(\tau_1<\tau_0),
\]
so $B\sim\mathrm{Ber}(z)$. This proves (iv).
\end{proof}

\begin{remark}
The representation of the Bernoulli variable given in eq.~\eqref{eq:Bjacobi} is not specific to the Jacobi martingale case we illustrate in this section. %More generally, for any bounded homogeneous diffusion martingale on $[0,1]$ such that both boundaries are attainable from the interior, one has
%$Z_\infty=\ind_{\{\tau_1<\tau_0\}}$ a.s. 
%%Indeed, $\tau:=\tau_0\wedge\tau_1<\infty$ almost surely in that case, and attainable boundaries are absorbing under the martingale constraint. The Jacobi case is a particularly explicit example.
%
%To see this, observe that $B=\tilde{B}$ a.s. where $\tilde{B}(\omega):=\ind_{\{\tau_1(\omega)=\tau(\omega)\}}$. By definition, $\tau=\tau_0\wedge\tau_1$, hence, $\tilde{B}$ is 1 if the boundary that $Z$ hits first before $t=\infty$ is one and 0 if the boundary that is hit first by $Z$ before $t=\infty$ is zero. %Next, define $S:=\{\omega\in\Omega:Z_\infty(\omega)\neq \tilde{B}(\omega)\}$ where $Z_\infty\in\{0,1\}$ a.s.. 
%Because the boundaries of $Z$ are absorbing and the first hitting time is finite, $Z_\infty=Z_\tau$ is 1 if $\tau=\tau_1$ and 0 if $\tau=\tau_0$. This shows that $Z_\infty=\tilde{B}=B$ a.s. 
More generally, let $Z$ be a bounded homogeneous diffusion martingale on $[0,1]$ such that both boundaries are attainable from the interior, and define
\[
\tau_0:=\inf\{t\ge 0: Z_t=0\}, \qquad
\tau_1:=\inf\{t\ge 0: Z_t=1\}, \qquad
\tau:=\tau_0\wedge\tau_1.
\]
Then $\tau<\infty$ a.s., and
\[
Z_\infty=\ind_{\{\tau_1<\tau_0\}} \qquad \text{a.s.}
\]
Indeed, by continuity of the paths one cannot have $\tau_0=\tau_1<\infty$, so almost surely exactly one of the events $\{\tau_1<\tau_0\}$ and $\{\tau_0<\tau_1\}$ occurs. Since attainable boundaries are absorbing under the martingale constraint, on $\{\tau_1<\tau_0\}$ one has $Z_t=1$ for all $t\ge \tau_1$, while on $\{\tau_0<\tau_1\}$ one has $Z_t=0$ for all $t\ge \tau_0$. Hence $Z_\infty=1$ on $\{\tau_1<\tau_0\}$ and $Z_\infty=0$ on $\{\tau_0<\tau_1\}$, proving the claim.
\end{remark}

% ===================================================================
% CONCLUSION
% ===================================================================

\section{Conclusion}

%\textcolor{red}{[New section.]}\medskip

%\input{./Submitted Version/Conclusion (old).tex}

%\textcolor{red}{[I propose a slightly more ``technical'' conclusion as an alternative (the former conclusion is in the file ``conclusion (old).tex''). I think we can afford this type of conclusion in a probability paper, but this is just an idea. Just think about it. Note that I make the distinction between the convergence of the law (which is what we show first) and only then the convergence of the random variables. Seen like this, the Bernoulli-Doob representation looks rather obvious: 1) we known that $Z_t\to Z_\infty$ 2) the martingale property of $Z$ yields $Z_t=\E[Z_\infty|\cF_t]$ 3) our distribution results say that $Z_\infty$ must be Bernoulli distributed, hence 4)  $Z$ admits the Bernoulli-Doob representation, it suffices to take $B=Z_\infty$.]}

This paper studies bounded one-dimensional martingales $Z$ generated by
time-homogeneous driftless diffusions on a compact interval, i.e.,
\[
Z_t=z+\int_0^t\sigma(Z_s)dW_s\;,\quad \Pr(Z_t\in[a,b])=1~~\forall t\geq 0.
\]

We show that when the diffusion coefficient $\sigma$ satisfies Assumption~\ref{ass:DC}, the distribution of $Z_t$ as $t\to\infty$ converges to $\mathcal{B}(a,b,z)$, a generalized Bernoulli distribution putting a mass of $z$ (resp. $1-z$) on the value $b$ (resp. $a$). Moreover, this distribution cannot be reached in finite time: $Z_t\not\sim \mathcal{B}(a,b,z)$ for all $t<\infty$, unless $z\in\{a,b\}$, in which case $Z_t=z$ is degenerated. This proves that the Bernoulli limit is genuinely asymptotic: the probability of remaining in the interior up to any fixed finite time is strictly positive. In addition, this conclusion holds even when the boundaries are accessible. In particular, it is independent of the Feller type of the boundaries. We further show that attainable boundaries
must be absorbing under the martingale constraint, thereby
reconciling Feller's boundary classification with the pathwise SDE
framework.

A second main contribution is the link with Bernoulli-Doob
martingales: every martingale $Z$ in our
class admits a Bernoulli-Doob representation, i.e.,
\[
Z_t=\E[B\mid\cF_t]\quad\text{where}\quad B\sim\mathcal{B}(a,b,z)\;.
\]
This stems from the fact that $Z_t$ converges almost surely to some random variable $Z_\infty$. Using the martingale property of $Z$, the above equality holds with $B=Z_\infty$. Finally, we know from our previous results that the latter is a generalized Bernoulli. A converse results holds
for continuous time-homogeneous Markov martingales on Brownian
filtrations. %We also prove that the Bernoulli limit is genuinely asymptotic: even when the individual boundaries are accessible, the probability of remaining in the interior up to any fixed finite time is strictly positive.

Several explicit examples illustrate the different possible boundary
regimes and the role of time-homogeneity. Finally, the paper
indicates how the Bernoulli-Doob viewpoint connects naturally with
survival processes and related models from the credit risk literature.

%\textcolor{red}{[We should perhaps finish the conclusion with a striking point, eg, explaining why our results matter. Here is an idea, but we could probably say more.] An important practical implication of our results it that the Bernoulli convergence that has been highlighted (empirically or theoretically) for specific bounded diffusion martingales such as the Jacobi martingale and the ``bounded extensions'' of the Brownian motion and geometric Brownian motion is, in fact, unavoidable. In particular, the only [?] way to design bounded diffusion martingales with non-Bernoulli limit is to consider time-inhomogenous diffusion coefficients.}

% ===================================================================
% BIBLIOGRAPHY
% ===================================================================
\newpage
\ifdefined\MyBib
  \bibliography{\MyBib}
  \bibliographystyle{plain}
\fi

\newpage
% ===================================================================
\appendix
% ===================================================================

% ===================================================================
\section{Feller Boundary Classification}
\label{App:boundaries}
% ===================================================================

This appendix recalls Feller's boundary classification for
one-dimensional diffusions in natural scale, specialised to the
driftless setting of this paper. The standard references are Karlin
and Taylor \cite[pp.~228--250]{KaTa85}, Borodin and Salminen
\cite[pp.~14--15]{Boro02}, and Rogers and Williams
\cite{RogersWilliams00}.

\begin{proposition}[Feller boundary classification]
\label{prop:boundary-classification}
Consider $dZ_t=\sigma(Z_t)\,dW_t$ on $[a,b]$ with $Z_0=z\in(a,b)$,
$\sigma\in C^2([a,b])$, $\sigma>0$ on $(a,b)$,
$\sigma(a)=\sigma(b)=0$. Since the drift is zero, the diffusion is
in natural scale $s(x)=x$ with speed measure density
$m(x)=2/\sigma^2(x)$. Fix an interior reference point $z\in(a,b)$
and define
\[
N_a := \int_a^z(z-x)\sigma^{-2}(x)\,dx\,,\qquad
\Sigma_a := \int_a^z(x-a)\sigma^{-2}(x)\,dx\,,
\]
and analogously $N_b$, $\Sigma_b$ at the right boundary $b$. The
boundary type is determined by the following table.
\[
\renewcommand{\arraystretch}{1.4}
\begin{array}{c|c|l}
N_a & \Sigma_a & \text{Boundary type of }a \\ \hline
<\infty & <\infty & \text{Regular: attainable and startable} \\
<\infty & =\infty & \text{Exit: attainable, not startable} \\
=\infty & <\infty & \text{Entrance: not attainable, startable} \\
=\infty & =\infty & \text{Natural: not attainable, not startable}
\end{array}
\]
Here $N_a<\infty$ means $\Pr_x(\tau_a<\infty)>0$ for all
$x\in(a,b)$; $N_a=\infty$ means $\tau_a=\infty$ a.s.
$\Sigma_a<\infty$ means $a$ is an admissible starting point in the
diffusion sense; $\Sigma_a=\infty$ means it is not. The same table
applies to the right boundary $b$.
\end{proposition}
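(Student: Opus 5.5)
The plan is to reduce the statement to the classical Feller--Karlin--Taylor test for a diffusion in natural scale, and then to check, boundary by boundary, that the two integrals $N_a$ and $\Sigma_a$ as defined above play the roles the table assigns to them. Only the left endpoint $a$ needs treatment, since $b$ follows by the reflection $x\mapsto a+b-x$. Because the drift is zero, $s(x)=x$ and the speed measure is $m(dx)=2\sigma^{-2}(x)\,dx$. The constant $2$ does not affect finiteness, so both integrals are, up to a factor, of the form $\int_a^z w(x)\,m(dx)$ with weight $w(x)=z-x$ for $N_a$ and $w(x)=x-a$ for $\Sigma_a$. The hypothesis $\sigma\in C^2([a,b])$ is used only to guarantee that $\sigma^{-2}$ is locally integrable on $(a,b)$, so that divergence can occur only at the endpoint. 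The choice of the reference point $z$ is irrelevant, since changing it alters each integral by a finite amount.

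For the \emph{attainability} column I would work with $u(x)=\Pr_x(\tau_a<\tau_c)$ and $h(x)=\E_x[\tau_a\wedge\tau_c]$ on $(a,c)$, for a fixed $c\in(a,b)$. In natural scale, $u$ is affine. The function $h$ solves $\tfrac12\sigma^2h''=-1$ with $h(a+)=h(c)=0$, which gives the Green-function formula
\[
h(x)=\int_a^c G(x,y)\,m(dy),\qquad G(x,y)=\frac{(x\wedge y-a)(c-x\vee y)}{c-a}.
\]
Letting the approximating left point $a+\varepsilon\downarrow a$ and using monotone convergence, $h(x)<\infty$ holds if and only if $\int_a (y-a)\,m(dy)<\infty$. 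Attainability of $a$ is equivalent to $h<\infty$ together with $u>0$. I would then match this integral against the column of the table that is labelled ``attainable''.

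For the \emph{startability} column I would use the entrance test. Starting from $a+$, the process reaches $c$ in finite expected time if and only if $\int_a^c (c-y)\,m(dy)<\infty$, which is obtained by the same Green-function computation with the boundary condition $h'(a+)=0$. Combining the two columns gives the four cases Regular, Exit, Entrance and Natural. Finally, the characterisations stated after the table ($N_a<\infty\Leftrightarrow\Pr_x(\tau_a<\infty)>0$, and so on) follow directly from the two equivalences just derived.

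The step I expect to be the main obstacle is exactly this matching of labels. The Green-function computation ties attainability to the weight $x-a$, whereas the statement as written ties it to $N_a$, whose weight is $z-x$. Before relying on the statement I would therefore test it on the Jacobi case $\sigma^2=x(1-x)$, for which $0$ is known to be an exit boundary (Section~\ref{sec:jacobi}). There $\int_0 (z-x)/x\,dx=\infty$ while $\int_0 x/x\,dx<\infty$, so the labelled table would classify $0$ as an entrance boundary, contradicting the finite hitting time established in Section~\ref{sec:jacobi}. So this sanity check shows that the table as written does not hold. The argument above yields the stated table only with the two column headings exchanged, i.e.\ attainability governed by $\Sigma_a<\infty$ and startability by $N_a<\infty$. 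The statement itself therefore cannot be proved as worded, and the proof must state this correction explicitly.
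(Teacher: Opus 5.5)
Your diagnosis of the table is right in substance, and it agrees with the source the paper relies on; the paper gives no proof, only a pointer to Karlin--Taylor. In natural scale, Karlin--Taylor's $\Sigma(l)=\int_l^x M[\eta,x]\,dS(\eta)$ reduces to $\int_a^z (x-a)\,m(dx)$ and characterises attainability. Their $N(l)=\int_l^x S[\eta,x]\,dM(\eta)$ reduces to $\int_a^z (z-x)\,m(dx)$ and characterises startability. This is exactly what your Green-function computation gives.

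So the paper's definitions of $N_a,\Sigma_a$ match Karlin--Taylor, but its column headings (and the ``means'' sentences after the table) are exchanged. Its verification of Case~3 in~\ref{App:4examples} then silently uses the exchanged definitions: it writes $\Sigma_0=\infty$, $N_0<\infty$, although $(z-x)\sigma^{-2}\sim z/x$ is the integrand of $N_0$. The two slips cancel, and the Jacobi boundaries come out as exit.

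There is also a cleaner, example-free way to see that the printed table cannot be the intended criterion. Since $z-x\ge (z-a)/2$ near $a$, $N_a<\infty$ forces $\int_a^z\sigma^{-2}<\infty$, and hence $\Sigma_a\le (z-a)\int_a^z\sigma^{-2}<\infty$. So the printed Exit row can never occur at a finite endpoint in natural scale.

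The gap is your conclusion that the statement ``cannot be proved as worded''. Your counterexample $\sigma(x)=\sqrt{x(1-x)}$ violates the stated hypothesis $\sigma\in C^2([a,b])$; it is not even $C^1$ at $0$. Your claim that this hypothesis only secures local integrability of $\sigma^{-2}$ is also false. With $L:=\max_{[a,b]}|\sigma'|$ and $\sigma(a)=0$ you get $\sigma(x)\le L(x-a)$, hence $\sigma^{-2}(x)\ge L^{-2}(x-a)^{-2}$, and both $N_a$ and $\Sigma_a$ diverge. Under the literal hypothesis every endpoint is therefore natural and only the last row of the table can occur. The proposition is then true: the other rows are vacuous and the swap is invisible.

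Your own argument proves this literal version, provided you add the standard step turning $h=\infty$ (the monotone limit of the approximating exit times) into $\Pr_x(\tau_a<\infty)=0$. Infinite expectation alone does not give this. The step goes as follows: a positive probability of reaching $a$ by a fixed time is inherited by every starting point below, by continuity and the strong Markov property. Together with non-degeneracy on the rest of $(a,c)$, this yields $\sup_{y\in(a,c)}\Pr_y(\tau_a\wedge\tau_c>T')<1$ for some $T'$. That gives geometric tails and contradicts $h=\infty$.

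What your Jacobi check actually shows is that the table fails under the weaker regularity the paper uses when it applies the proposition. Cases~1, 3 and~4 of Table~\ref{tab:examples-corrected} are not $C^1$ at the endpoints. A correct write-up should say this explicitly. Either keep $C^2([a,b])$ and conclude that only natural boundaries arise, or replace it by continuity and positivity of $\sigma$ on $(a,b)$ and exchange the headings as you propose.
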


% ===================================================================
\section{Four Examples of Boundary Classification}
\label{App:4examples}
% ===================================================================

We verify the four cases of Table~\ref{tab:examples-corrected}
using Proposition~\ref{prop:boundary-classification}. For each
case we record the local asymptotics of $\sigma^{-2}$ near each
endpoint and determine the finiteness of $(\Sigma,N)$.

\subsection*{Case 1: $\sigma(x)=(x(1-x))^{1/4}$ (both endpoints
regular)}

$\sigma^{-2}(x)=1/\sqrt{x(1-x)}$. Near $0$:
$(z-x)\sigma^{-2}(x)=\mathcal{O}(x^{-1/2})$ and
$x\sigma^{-2}(x)=\mathcal{O}(x^{1/2})$, so both $\Sigma_0$ and
$N_0$ are finite. By symmetry the same holds at $1$. Both endpoints
are \emph{regular}.

Closed-form primitive: using the substitution $x=\sin^2\theta$,
\[
\int\frac{x-z}{\sqrt{x(1-x)}}\,dx
= (1-2z)\arcsin\sqrt{x}-\sqrt{x(1-x)}+C\,.
\]
Both endpoint limits are finite, confirming
$(\Sigma_0,N_0)=(\text{finite},\text{finite})$ and
$(\Sigma_1,N_1)=(\text{finite},\text{finite})$.

\subsection*{Case 2: $\sigma(x)=x(1-x)$ (both endpoints natural)}

$\sigma^{-2}(x)=1/[x^2(1-x)^2]$. Near $0$:
$(z-x)\sigma^{-2}(x)\sim Cx^{-2}$ and
$x\sigma^{-2}(x)\sim x^{-1}$, so $\Sigma_0=N_0=\infty$. By
symmetry both endpoints are \emph{natural}.

\subsection*{Case 3: $\sigma(x)=\sqrt{x(1-x)}$ (both endpoints
exit)}

$\sigma^{-2}(x)=1/[x(1-x)]$. Near $0$:
$(z-x)\sigma^{-2}(x)\sim zx^{-1}$ and $x\sigma^{-2}(x)\sim1$, so
$\Sigma_0=\infty$ and $N_0<\infty$. By symmetry both endpoints are
\emph{exit}. Since $\sigma(0)=\sigma(1)=0$, the process is absorbed
upon hitting.

\subsection*{Case 4: $\sigma(x)=x^{1/4}(1-x)$ (left regular, right
natural)}

$\sigma^{-2}(x)=x^{-1/2}(1-x)^{-2}$. Near $0$:
$\Sigma_0<\infty$ and $N_0<\infty$ (\emph{regular}). Near $1$:
$(x-z)\sigma^{-2}(x)\sim(1-x)^{-2}$ and
$(1-x)\sigma^{-2}(x)\sim(1-x)^{-1}$, so $\Sigma_1=N_1=\infty$
(\emph{natural}).

% ===================================================================
\section{Moment Recursion for the Jacobi Martingale}
\label{app:B}
% ===================================================================

For the Jacobi martingale $\sigma(x)=\sqrt{\eta x(1-x)}$, the
conditional moments $M_n(t):=\E[Z_t^n\mid Z_s=z]$ satisfy a
closed-form recursion that provides an independent verification of
Bernoulli convergence (Corollary~\ref{cor:Bernoulli}) for this
specific model, and also matches the results of~\cite{Gour02} in
the limit $\kappa\to0$.

Let $\lambda_n:=\eta n(n-1)/2$ with $\lambda_0=\lambda_1=0$.

\begin{theorem}
\label{thm:momentrecursion}
Suppose $(M_n(t))_{n\ge0}$ satisfies for $n\ge1$ the integral
recursion
\[
M_n(t) = z^ne^{-\lambda_n(t-s)}
+\lambda_n\int_s^te^{-\lambda_n(t-u)}M_{n-1}(u)\,du\,,
\]
with $M_0\equiv1$ and $M_1\equiv z$. Then:
\begin{enumerate}
\item[\rm(1)] \textbf{(ODE)}
      $\dfrac{d}{dt}M_n(t)=\lambda_n(M_{n-1}(t)-M_n(t))$,
      $M_n(s)=z^n$.
\item[\rm(2)] \textbf{(Matrix exponential)}
      With $B$ the lower-bidiagonal matrix $B_{n,n}=-\lambda_n$,
      $B_{n,n-1}=\lambda_n$:
      $M(t)=\exp((t-s)B)\,M(s)$.
\item[\rm(3)] \textbf{(Closed form)}
      For $0\le m\le n$:
      \[
      M_n(t)
      = \sum_{m=0}^nz^m\sum_{j=m}^ne^{-\lambda_j(t-s)}
        \frac{\displaystyle\prod_{r=m+1}^n\lambda_r}
             {\displaystyle\prod_{\substack{r=m\\r\ne j}}^n
              (\lambda_j-\lambda_r)}\,.
      \]
\item[\rm(4)] \textbf{(Examples)}
      $M_0=1$, $M_1=z$,
      $M_2(t)=z+z(z-1)e^{-\eta(t-s)}$,
      \[
      M_3(t)=z+\tfrac{3}{2}z(z-1)e^{-\eta(t-s)}
      +\bigl(z^3-\tfrac{3}{2}z(z-1)-z\bigr)e^{-3\eta(t-s)}\,.
      \]
\end{enumerate}
As $t\to\infty$, $M_n(t)\to z$ for all $n\ge1$, confirming that
$Z_\infty\sim\mathrm{Ber}(z)$.
\end{theorem}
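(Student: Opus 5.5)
I will verify the four claims in order, working from the integral recursion. For (1), I differentiate the integral equation in $t$. The first term gives $-\lambda_n z^n e^{-\lambda_n(t-s)}$. The integral term is $\lambda_n e^{-\lambda_n t}\int_s^t e^{\lambda_n u}M_{n-1}(u)\,du$, and by the product rule together with the fundamental theorem of calculus its derivative is $\lambda_n M_{n-1}(t)-\lambda_n\cdot(\text{integral term})$. Adding the two pieces gives $\dot M_n=\lambda_n(M_{n-1}-M_n)$, and setting $t=s$ gives $M_n(s)=z^n$. For this step I need $M_{n-1}$ to be continuous, which follows by induction on $n$ starting from $M_0\equiv1$. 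For $n=1$ we have $\lambda_1=0$, so the recursion gives $M_1\equiv z$, consistent with the statement. For $n=0$ I use the convention $B_{0,\cdot}=0$.

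For (2), the system in (1), written for $n=0,\dots,N$, is the linear constant-coefficient system $\dot M=BM$ with the lower-bidiagonal matrix $B$. Its unique solution is $M(t)=e^{(t-s)B}M(s)$. The system closes because each $M_n$ involves only $M_{n-1}$, so every finite truncation is exact.

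For (3), the $\lambda_j$ are pairwise distinct for $j\ge1$ but $\lambda_0=\lambda_1=0$, and this is the step I expect to be the main obstacle. I will first prove the formula for distinct rates by induction on $n$. I substitute the claimed expression for $M_{n-1}$ into the recursion and integrate each exponential using
\[
\lambda_n\int_s^t e^{-\lambda_n(t-u)}e^{-\lambda_j(u-s)}\,du=\frac{\lambda_n}{\lambda_n-\lambda_j}\bigl(e^{-\lambda_j(t-s)}-e^{-\lambda_n(t-s)}\bigr).
\]
I then collect the coefficients of $e^{-\lambda_n(t-s)}$. These come from the $z^n$ term plus a sum over $j$. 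To show they match the claimed coefficient I use the partial-fraction identity $\sum_{j=m}^{n}\prod_{r\neq j}(\lambda_j-\lambda_r)^{-1}=0$, which is the vanishing of the leading coefficient of the Lagrange interpolant of a constant. The degenerate case $m=0$, where $\lambda_0=\lambda_1$, is handled by the conventions that the $m=0$ term is absent for $n\ge1$ and that $z^0$ contributes only through $M_0$. Alternatively, I can obtain it by continuity, perturbing $\lambda_0=\varepsilon$ and letting $\varepsilon\to0$; in that limit the colliding terms merge into the constant term.

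For (4) and the limit, I solve the ODE of (1) directly rather than through (3). For $n=2$, $\lambda_2=\eta$ and $\dot M_2=\eta(z-M_2)$, which gives $M_2=z+(z^2-z)e^{-\eta(t-s)}$. For $n=3$, $\lambda_3=3\eta$, and variation of constants with $M_2$ as forcing gives the coefficient $\tfrac{3\eta}{3\eta-\eta}z(z-1)=\tfrac32 z(z-1)$ on $e^{-\eta(t-s)}$. The remaining constant is then fixed by the initial condition $M_3(s)=z^3$.

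For the limit $M_n(t)\to z$, I argue by induction using (1). Since $\lambda_n>0$ for $n\ge2$, $M_n$ is a stable linear filter of $M_{n-1}$, and a convolution of $M_{n-1}\to z$ with the kernel $\lambda_n e^{-\lambda_n\cdot}$ of total mass one converges to $z$. Finally, $\E[Z_\infty^n]=z$ for all $n\ge1$ with $Z_\infty\in[0,1]$ gives $\E[Z_\infty(1-Z_\infty)]=0$, hence $Z_\infty\in\{0,1\}$ a.s. with mean $z$, i.e. $Z_\infty\sim\mathrm{Ber}(z)$; moment determinacy on a compact set justifies passing from the moments to the law.
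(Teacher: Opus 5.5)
The gap is in (3): the closed form you set out to verify by induction is false as printed, so the coefficient matching you announce cannot succeed. Your arguments for (1), (2) and (4) follow the paper, and your limit argument is correct and independent of (3). That argument is the induction on the stable filter $\dot M_n=\lambda_n(M_{n-1}-M_n)$, using $\lambda_n>0$ for $n\ge2$.

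Your own integration identity shows where the induction breaks. A term $e^{-\lambda_j(t-s)}$ of $M_{n-1}$ with $j<n$ reappears in $M_n$ multiplied by $\lambda_n/(\lambda_n-\lambda_j)$. The printed coefficient for $M_n$, however, is the printed one for $M_{n-1}$ multiplied by $\lambda_n/(\lambda_j-\lambda_n)$, because its denominator gains the factor $\lambda_j-\lambda_n$. After your partial-fraction identity, the $e^{-\lambda_n(t-s)}$ coefficient is likewise off by a sign. The true coefficients are $(-1)^{n-m}$ times the printed ones.

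Already for $n=2$, the $m=1$ term is $z\lambda_2\bigl[\frac{1}{\lambda_1-\lambda_2}+\frac{e^{-\eta(t-s)}}{\lambda_2-\lambda_1}\bigr]=-z+ze^{-\eta(t-s)}$. So the printed formula gives $M_2(t)=-z+(z^2+z)e^{-\eta(t-s)}$. This matches $M_2(s)=z^2$, but it violates the ODE in (1), contradicts (4), and tends to $-z$. In general its constant term is $(-1)^{n-1}z$.

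The $m=0$ terms are also $0/0$, because $\lambda_0=\lambda_1=0$. Dropping them is correct: $B_{1,0}=\lambda_1=0$ makes column $0$ of $B$ vanish, so $M_0$ never feeds into $M_n$ for $n\ge1$. Your $\varepsilon$-perturbation does not, however, make these terms ``merge into the constant term''. For $\lambda_0=\varepsilon\neq0$ they vanish identically, since the numerator contains $\lambda_1=0$. The constant $z$ comes solely from $(m,j)=(1,1)$.

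What your induction does prove, essentially verbatim, is the divided-difference form of the entries of $e^{(t-s)B}$:
\[
M_n(t)=\sum_{m=1}^n z^m\Bigl(\prod_{r=m+1}^n\lambda_r\Bigr)\sum_{j=m}^n\frac{e^{-\lambda_j(t-s)}}{\prod_{r=m,\,r\ne j}^n(\lambda_r-\lambda_j)}\,,\qquad n\ge1\,.
\]
This reproduces $M_2$ and $M_3$ in (4) and has constant term exactly $z$. You should state and prove this corrected version rather than claim agreement with the printed one.

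The paper's own treatment of (3) is only a sketch (``explicitly computable'' entries, ``the formula stabilises'') and does not catch the sign either. Its limit step reads off ``the constant term $z$'' from (3), which from the printed formula would give $(-1)^{n-1}z$. Your route to $M_n(t)\to z$ through (1) alone avoids this dependence and is the more robust argument.
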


\begin{proof}
\textbf{(1)} Differentiate the integral recursion using Leibniz'
rule; the integral term cancels exactly, leaving the stated ODE.
Evaluating at $t=s$ gives $M_n(s)=z^n$.

\textbf{(2)} The ODE system $\frac{d}{dt}M(t)=BM(t)$ has unique
solution $M(t)=\exp((t-s)B)M(s)$.

\textbf{(3)} For finite $N$, the $(N+1)\times(N+1)$ truncation
$B_N$ is lower-bidiagonal with explicitly computable matrix
exponential entries (obtained by diagonalising each $2\times2$
block or by repeated variation of constants). The formula
stabilises as $N\to\infty$ since each entry depends only on
finitely many $\lambda_j$.

\textbf{(4)} For $n=2$: $\frac{d}{dt}M_2=\eta(z-M_2)$,
$M_2(s)=z^2$, giving $M_2(t)=z+(z^2-z)e^{-\eta(t-s)}$. For
$n=3$: substitute $M_2$ and integrate.

The convergence $M_n(t)\to z$ follows since $e^{-\lambda_j(t-s)}\to0$
for all $j\ge2$ (as $\lambda_j>0$ for $j\ge2$), leaving only the
constant term $z$.
\end{proof}

\end{document}